\newtheorem{thm}{Theorem}[section]
\newtheorem{lem}[thm]{Lemma}
\newtheorem{cor}[thm]{Corollary}
\newtheorem{pro}[thm]{Proposition}
\newtheorem{defi}[thm]{Definition}
\newcommand {\emptycomment}[1]{}
\newcommand{\be }{\begin{equation}}
\newcommand{\ee }{\end{equation}}
\newcommand{\g}{\mathfrak g}
\newcommand{\huaC}{{\mathcal{C}}}
\newcommand{\Id}{\rm{Id}}
\newcommand{\br}[1]{   [ \cdot,    \cdot  ]   }
\newcommand{\dM}{\mathrm{d}}
\newcommand{\Hom}{\mathrm{Hom}}
\newcommand{\gl}{\mathfrak {gl}}
\begin{document}

\title[]{Deformations and abelian extensions on anti-pre-Lie algebras }

\author{Shanshan Liu}
\address{School of Mathematics and Statistics, Northeast Normal University, Changchun 130024,  China}
\email{shanshanmath@163.com}
\author{Zhao Chen}
\address{School of Mathematics and Physics, Guangxi Minzu University, Nanning 530007,  China}
\email{czhao0101@163.com}

\author{Liangyun Chen*}
\address{School of Mathematics and Statistics, Northeast Normal University, Changchun 130024, China}
\email{ chenly640@nenu.edu.cn}


\begin{abstract}
In this paper, we introduce the representation of anti-pre-Lie algebras and give the second cohomology group of anti-pre-Lie algebras. As applications, first, we study linear deformations of anti-pre-Lie algebras. The notion of a Nijenhuis operator on an anti-pre-Lie algebra is introduced which can generate a trivial linear deformation of an anti-pre-Lie algebra. Then, we study formal deformations of anti-pre-Lie algebras. We show that the infinitesimal of a formal deformation is a 2-cocycle with the coefficients in the regular representation and depends only on its cohomology class. Moreover, if the second cohomology group $H^2(A;A)$ is trivial, then the anti-pre-Lie algebra is rigid. Finally, we introduce the notion of abelian extensions. We show that abelian extensions are classified by the second cohomology group $H^2(A;V)$.

\end{abstract}

\renewcommand{\thefootnote}{\fnsymbol{footnote}}
\footnote[0]{* Corresponding author}
\keywords{anti-pre-Lie algebra, cohomology, linear deformation, formal deformation, Nijenhuis operator, abelian extension}
\footnote[0]{{\it{MSC 2020}}: 17A36, 17A40, 17B10, 17B40, 17B60, 17B63, 17D25}

\maketitle
\vspace{-5mm}
\tableofcontents

\allowdisplaybreaks


\section{Introduction}
The notion  of a pre-Lie algebra (also called  left-symmetric algebras,
quasi-associative algebras, Vinberg algebras and so on) has been introduced independently  by M. Gerstenhaber in  deformation theory  of rings and algebras \cite{Gerstenhaber63}. Pre-Lie algebra arose from the study of affine manifolds and affine structures on Lie group \cite{JLK}, homogeneous convex  cones \cite{Vinberg}. Its defining identity is weaker than associativity. This algebraic structure describes some properties of cochains space in Hochschild cohomology of an associative algebra, rooted trees and vector fields on affine spaces.
 Moreover, it is playing an increasing role in algebra, geometry and physics due to their applications in nonassociative algebras, combinatorics,  numerical Analysis and quantum field theory, see also \cite{Pre-lie algebra in geometry,Bai,Bai2,CK}. There is a close relationship between pre-Lie algebras and Lie algebras: a pre-Lie algebra $(A,\cdot)$ gives rise to a Lie algebra $(A,[\cdot,\cdot]_C)$ via the commutator bracket, which is called the sub-adjacent Lie algebra and denoted by $A^C$. Furthermore, the map $L:A\longrightarrow\gl(A)$, defined by $L_xy=x\cdot y$ for all $x,y\in A$, gives rise to a representation of the sub-adjacent Lie algebra $A^C$ on $A$. 

 A pre-Lie algebra can be induced from a symplectic form on a Lie algebra \cite{BYC}. Similarly, a new kind of algebraic structures, called anti-pre-Lie algebra, has been introduced by G. Liu and C. Bai on \cite{GL}, as the underlying algebraic structures of nondegenerate commutative $2$-cocycles on Lie algebras \cite{AD}. Anti-pre-Lie algebras have some properties which are analogue to the pre-Lie algebras. In fact, an anti-pre-Lie algebra also gives rise to a Lie algebra via the commutator bracket. Furthermore, the negative left multiplication operator gives rise to a representation of this Lie algebra. Consequently there are the constructions of nondegenerate commutative $2$-cocycles and symplectic forms on semi-direct product Lie algebras from anti-pre-Lie algebras and pre-Lie algebras \cite{BAK} respectively. The analogues also appear in the constructions of examples from linear functions and symmetric bilinear forms \cite{Bai1,SIS}. On the other hand, there is a obvious difference between the anti-pre-Lie algebras and the pre-Lie algebras: over the field of characteristic zero, the sub-adjacent Lie algebras of the former can be simple, whereas there is not a compatible pre-Lie algebra structure on a simple Lie algebra \cite{AM}.

Representations and cohomology theories of various kinds of algebras have been developed with a great success. The representation theory of an algebraic
object is very important since it reveals some of its profound structures hidden underneath.  Furthermore, the cohomology theories of an algebraic
object  occupy a  central position since they can give invariants, e.g. they can control deformations and extension problems. Deformation of rings and algebras have been studied by M. Gerstenhaber on \cite{Gerstenhaber1,Gerstenhaber2,Gerstenhaber3,Gerstenhaber4}.

The purpose of this paper is to give a systematic study of a cohomology of an anti-pre-Lie algebra and its application. In Section \ref{sec:coh}, first we recall the notion of anti-pre-Lie algebras and give representations and dual-representations of anti-pre-Lie algebras,  then we provide our main result defining the second cohomology group of an anti-pre-Lie algebra with coefficients in a given representation. In Section \ref{sec:linear}, we study linear deformations of anti-pre-Lie algebras using the cohomology defined in the previous section, and introduce the notion of a Nijenhuis operator on an anti-pre-Lie algebra. We show that a Nijenhuis operator gives rise to a trivial deformation. We study the relation between linear deformations of an anti-pre-Lie algebra and linear deformations of its sub-adjacent Lie algebra. In Section \ref{sec:def}, we study one parameter formal deformations of an anti-pre-Lie algebra using formal power series. We show that the infinitesimal of a formal deformation is a 2-cocycle and depends only on its cohomology class. Moreover, the second cohomology group of anti-pre-Lie algebras can control formal deformations of anti-pre-Lie algebras. In section \ref{sec:ext}, we deal with abelian extensions of anti-pre-Lie algebras. We show that the second cohomology group classifies abelian extensions of an anti-pre-Lie algebra by a given representation.

\vspace{2mm}
\noindent

\section{Representations and second cohomology groups of anti-pre-Lie algebras}\label{sec:coh}
In this section, first we recall the notion of anti-pre-Lie algebras. Then we define representations and dual representations of anti-pre-Lie algebras. Finally, we introduce the second cohomology group of anti-pre-Lie algebras, which will be used to classify infinitesimal deformations and abelian extensions of anti-pre-Lie algebras.

\begin{defi}{\rm(\cite{GL})}
Let A be a vector space with a bilinear map $\cdot:A\otimes A\longrightarrow A$. Then $(A,\cdot)$ is called an {\bf anti-pre-Lie algebra} if for all $x,y,z\in A$, the following equations are satisfied:
\begin{eqnarray}
\label{anti-pre-lie-1}x\cdot(y\cdot z)-y\cdot(x\cdot z)=[y,x]\cdot z,\\
\label{anti-pre-lie-2}[x,y]\cdot z+[y,z]\cdot x+[z,x]\cdot y=0,
\end{eqnarray}
where
\begin{equation}
[x,y]=x\cdot y-y\cdot x.
\end{equation}
  \end{defi}
Let $(A,\cdot)$ be an anti-pre-Lie algebra. The commutator $[x,y]=x\cdot y-y\cdot x$ gives a Lie algebra $(A,[\cdot,\cdot])$, which is denoted by $A^C$ and called  the {\bf sub-adjacent Lie algebra} of $(A,\cdot)$. $(A,\cdot)$ is called the {\bf compatible anti-pre-Lie algebra} of $A^C$. Moreover, $(A,-L)$ is a representation of the sub-adjacent Lie algebra $A^C$, where $L:A\longrightarrow \gl(A)$ is a linear map defined by $L(x)(y)=x\cdot y$ for all $x,y\in A$.

\begin{defi}
 A {\bf morphism} from an anti-pre-Lie algebra $(A,\cdot)$ to an anti-pre-Lie algebra $(A',\cdot')$ is a linear map $f:A\longrightarrow A'$ such that for all
 $x,y\in A$, the following equation is satisfied:
\begin{equation}
\label{ morphism}f(x\cdot y)=f(x)\cdot' f(y),\hspace{3mm}\forall x,y\in A.
\end{equation}
\end{defi}

 \begin{defi}\label{defi:anti-pre-lie representation}
 A {\bf representation} of an anti-pre-Lie algebra $(A,\cdot)$ on
 a vector space $V$ consist of a pair $(\rho,\mu)$, where
  $\rho,\mu:A\longrightarrow \gl(V)$ is a linear map such that for all
  $x,y\in A$, the following equalities are satisfied:
\begin{eqnarray}
\label{anti-pre-lie-rep-1}\rho(x)\circ\rho(y)-\rho(y)\circ\rho(x)&=&\rho[y,x],\\
\label{anti-pre-lie-rep-2}\mu(x\cdot y)-\rho(x)\circ\mu(y)&=&\mu(y)\circ\rho(x)-\mu(y)\circ\mu(x),\\
\label{anti-pre-lie-rep-3}\mu(y)\circ\mu(x)-\mu(x)\circ\mu(y)+\rho[x,y]&=&\mu(y)\circ\rho(x)-\mu(x)\circ\rho(y).
\end{eqnarray}
  \end{defi}
We denote a representation of an anti-pre-Lie algebra $(A,\cdot)$ by a triple  $(V,\rho,\mu)$. Furthermore, let $L,R:A\longrightarrow \gl(A)$ be linear maps, where $L_xy=x\cdot y, R_xy=y\cdot x$. Then $(A,L,R)$ is also a representation, which is called the {\bf regular representation}.

We define a bilinear operation $\cdot_{A\oplus V}:\otimes^2(A\oplus V)\longrightarrow(A\oplus V)$ by
\begin{equation}
(x+u)\cdot_{A\oplus V} (y+v):=x\cdot y+\rho(x)(v)+\mu(y)(u),\quad \forall x,y \in A,  u,v\in V.
\end{equation}

\begin{pro}\label{semi-direct-product}
 With the above notation, $(A\oplus V,\cdot_{A\oplus V})$ is an anti-pre-Lie algebra, which is denoted by $A\ltimes_{(\rho,\mu)}V$ and called  the {\bf semi-direct product} of the anti-pre-Lie algebra $(A,\cdot)$ and the representation $(V,\rho,\mu)$.
\end{pro}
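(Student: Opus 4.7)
The plan is to verify directly the two defining axioms of an anti-pre-Lie algebra, namely \eqref{anti-pre-lie-1} and \eqref{anti-pre-lie-2}, for $\cdot_{A\oplus V}$ applied to elements $x+u,\, y+v,\, z+w$ of $A\oplus V$. Since the bilinear operation $\cdot_{A\oplus V}$ decomposes into an $A$-valued component (given by $x\cdot y$) and a $V$-valued component (given by $\rho(x)v+\mu(y)u$), each axiom splits into a statement in $A$ and a statement in $V$, and these can be checked independently.

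First I would compute the commutator in $A\oplus V$: a direct calculation gives
\[
[x+u,\,y+v]_{A\oplus V}=[x,y]+\bigl(\rho(x)-\mu(x)\bigr)v-\bigl(\rho(y)-\mu(y)\bigr)u.
\]
With this in hand, I would expand both sides of \eqref{anti-pre-lie-1} for $x+u,y+v,z+w$. The $A$-component collapses to $x\cdot(y\cdot z)-y\cdot(x\cdot z)=[y,x]\cdot z$, which holds because $(A,\cdot)$ is anti-pre-Lie. For the $V$-component, I would group the resulting expression by the three free vectors $u,v,w$: the coefficient of $w$ reduces to the identity $\rho(x)\rho(y)-\rho(y)\rho(x)=\rho([y,x])$, which is exactly \eqref{anti-pre-lie-rep-1}; and the coefficients of $u$ and of $v$ reduce to $\mu(y\cdot z)-\rho(y)\mu(z)=\mu(z)\rho(y)-\mu(z)\mu(y)$ and its analogue with $x,y$ exchanged, which are two instances of \eqref{anti-pre-lie-rep-2}.

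Next I would carry out the same type of expansion for the cyclic identity \eqref{anti-pre-lie-2}. The $A$-component again reduces to the cyclic identity for $(A,\cdot)$. For the $V$-component, after substituting the commutator formula above and cyclically summing, the terms naturally split into three groups according to which of $u,v,w$ they act on. By the symmetry of the cyclic sum, each of the three groups is obtained from one of the others by renaming the variables, so it suffices to check one of them; collecting terms in, say, the coefficient of $w$, the resulting identity rearranges into exactly \eqref{anti-pre-lie-rep-3} (together with one use of \eqref{anti-pre-lie-rep-1} if needed to swap a $\rho([x,y])$-term). Thus both defining axioms hold, proving that $(A\oplus V,\cdot_{A\oplus V})$ is an anti-pre-Lie algebra.

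The proof is entirely computational; the only real obstacle is bookkeeping, keeping careful track of the signs arising from the antisymmetric commutator and arranging the many terms so that the three representation axioms are visibly recovered. No further ingredient beyond Definition~\ref{defi:anti-pre-lie representation} is needed.
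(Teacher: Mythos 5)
Your proposal is correct: the direct expansion of both axioms for $(x+u)\cdot_{A\oplus V}(y+v)$, the commutator formula $[x+u,y+v]=[x,y]+(\rho-\mu)(x)v-(\rho-\mu)(y)u$, and the identification of the $w$-, $u$-, $v$-coefficients with \eqref{anti-pre-lie-rep-1}, \eqref{anti-pre-lie-rep-2} and \eqref{anti-pre-lie-rep-3} all check out. This is exactly the computation the paper omits as ``straightforward,'' so your argument coincides with the intended proof.
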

\begin{proof}
For all $x,y,z\in A, u,v,w\in V$, by \eqref{anti-pre-lie-1}, \eqref{anti-pre-lie-rep-1} and \eqref{anti-pre-lie-rep-2}, we have
\begin{eqnarray*}
&&(x+u)\cdot_{A\oplus V}((y+v)\cdot_{A\oplus V}(z+w))-(y+v)\cdot_{A\oplus V}((x+u)\cdot_{A\oplus V}(z+w))-[y+v,x+u]\cdot_{A\oplus V}(z+w)\\
&=&(x+u)\cdot_{A\oplus V}(y\cdot z+\rho(y)w+\mu(z)v)-(y+v)\cdot_{A\oplus V}(x\cdot z+\rho(x)w+\mu(z)u)\\
&&-(y\cdot x+\rho(y)u+\mu(x)v)\cdot_{A\oplus V}(z+w)+(x\cdot y+\rho(x)v+\mu(y)u)\cdot_{A\oplus V}(z+w)\\
&=&x\cdot(y\cdot z)+\rho(x)\rho(y)w+\rho(x)\mu(z)v+\mu(y\cdot z)u-y\cdot(x\cdot z)-\rho(y)\rho(x)w-\rho(y)\mu(z)u-\mu(x\cdot z)v\\
&&-(y\cdot x)\cdot z-\rho(y\cdot x)w-\mu(z)\rho(y)u-\mu(z)\mu(x)v+(x\cdot y)\cdot z+\rho(x\cdot y)w+\mu(z)\rho(x)v+\mu(z)\mu(y)u\\
&=&0,
\end{eqnarray*}
which implies that equation \eqref{anti-pre-lie-1} holds. Similarly, by \eqref{anti-pre-lie-2}, \eqref{anti-pre-lie-rep-2} and \eqref{anti-pre-lie-rep-3}, we obtain
\begin{equation*}
[x+u,y+v]\cdot_{A\oplus V} (z+w)+[y+v,z+w]\cdot_{A\oplus V} (x+u)+[z+w,x+u]\cdot_{A\oplus V} (y+v)=0.
\end{equation*}
This finishes the proof.
\end{proof}
\begin{pro}
Let $(V,\rho,\mu)$ be a representation of an anti-pre-Lie algebra $(A,\cdot)$. Then $(V,\rho-\mu)$ is a representation of the sub-adjacent Lie algebra $A^C$.
\end{pro}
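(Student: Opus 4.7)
The plan is to deduce this from Proposition \ref{semi-direct-product}, exploiting the fact that the sub-adjacent Lie algebra of an anti-pre-Lie semi-direct product must itself take the form of an ordinary semi-direct product of Lie algebras.

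First, Proposition \ref{semi-direct-product} guarantees that $A \ltimes_{(\rho,\mu)} V$ is an anti-pre-Lie algebra, so the general construction of the sub-adjacent Lie algebra yields an honest Lie bracket on $A \oplus V$. Second, I would compute this commutator directly from the formula for $\cdot_{A\oplus V}$, obtaining
\begin{equation*}
[x+u,\, y+v] = [x, y] + (\rho - \mu)(x)(v) - (\rho - \mu)(y)(u).
\end{equation*}
This is precisely the standard semi-direct product Lie bracket built from $A^C$, the abelian Lie algebra $V$, and the candidate action $\rho - \mu \colon A \to \gl(V)$. Third, invoking the well-known criterion that such a bracket satisfies the Jacobi identity if and only if the action is a Lie algebra representation, and knowing from Step~1 that this bracket is genuinely a Lie bracket, I would conclude that $(V,\rho - \mu)$ is a representation of $A^C$.

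The main obstacle --- modest in this approach --- is verifying that the Jacobi identity for the bracket displayed above really encodes the representation identity $(\rho - \mu)([x,y]) = [(\rho - \mu)(x),\, (\rho - \mu)(y)]$: this amounts to unpacking the $A$-$A$-$V$ case of Jacobi, in which the $V$-slot contribution from the abelian structure vanishes automatically. If one prefers a purely computational proof, one can instead expand $(\rho - \mu)(x)\circ(\rho - \mu)(y) - (\rho - \mu)(y)\circ(\rho - \mu)(x)$ into four commutators, handle $[\rho(x),\rho(y)]$ by \eqref{anti-pre-lie-rep-1}, rewrite $[\mu(x),\mu(y)]$ via \eqref{anti-pre-lie-rep-3}, and then compare with $(\rho - \mu)([x,y])$ after computing $\mu([x,y]) = \mu(x\cdot y) - \mu(y\cdot x)$ from \eqref{anti-pre-lie-rep-2}; the $\rho([x,y])$ and $\mu\mu$ contributions should cancel, leaving the mixed terms $\rho(y)\mu(x) - \rho(x)\mu(y)$ on both sides.
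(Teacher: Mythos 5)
Your argument is correct, but it follows a different route from the paper. The paper proves the proposition by brute force: it expands $[(\rho-\mu)(x),(\rho-\mu)(y)]-(\rho-\mu)[x,y]$ into the four commutators and the two $\rho[x,y]$, $\mu[x,y]$ terms, then eliminates everything using \eqref{anti-pre-lie-rep-1}--\eqref{anti-pre-lie-rep-3} until the expression vanishes --- essentially the ``purely computational'' fallback you sketch at the end, and your sketch of that cancellation is accurate (both sides reduce to $\rho(y)\circ\mu(x)-\rho(x)\circ\mu(y)$). Your primary route instead reads the result off from Proposition \ref{semi-direct-product}: the commutator of $\cdot_{A\oplus V}$ is $[x+u,y+v]=[x,y]+(\rho-\mu)(x)(v)-(\rho-\mu)(y)(u)$, which is the semi-direct product bracket for the candidate action $\rho-\mu$ on the abelian ideal $V$, and the $A$-$A$-$V$ instance of the Jacobi identity (guaranteed because every anti-pre-Lie algebra has a sub-adjacent Lie algebra) is exactly $(\rho-\mu)[x,y]=[(\rho-\mu)(x),(\rho-\mu)(y)]$. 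This is conceptually cleaner and mirrors the analogous statement for pre-Lie algebras; its only cost is that it leans on Proposition \ref{semi-direct-product}, which the paper asserts as ``straightforward'' without proof --- the verification of the anti-pre-Lie axioms on $A\oplus V$ encodes at least as much computation as the paper's direct check, so the work is relocated rather than removed. Logically, though, citing that proposition is legitimate, and your derivation from it is sound.
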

\begin{proof}
For all $x,y\in A$, by \eqref{anti-pre-lie-rep-1}, \eqref{anti-pre-lie-rep-2} and \eqref{anti-pre-lie-rep-3}, we have
\begin{eqnarray*}
&&[(\rho-\mu)(x),(\rho-\mu)(y)]-(\rho-\mu)[x,y]\\
&=&[\rho(x),\rho(y)]-[\rho(x),\mu(y)]-[\mu(x),\rho(y)]+[\mu(x),\mu(y)]-\rho[x,y]+\mu[x,y]\\
&=&\rho(x)\circ\rho(y)-\rho(y)\circ\rho(x)-\rho(x)\circ\mu(y)+\rho(y)\circ\mu(x)+\mu(x\cdot y)-\mu(y\cdot x)\\
&=&\rho[y,x]+\mu(y)\circ\rho(x)-\mu(y)\circ\mu(x)-\mu(x)\circ\rho(y)+\mu(x)\circ\mu(y)\\
&=&0,
\end{eqnarray*}
which implies that
\begin{equation*}
[(\rho-\mu)(x),(\rho-\mu)(y)]=(\rho-\mu)[x,y].
\end{equation*}
This finishes the proof.
\end{proof}
Let $(V,\rho,\mu)$ be a representation of an anti-pre-Lie algebra $(A,\cdot)$. For all $x\in A,u\in V,\xi\in V^*$, define $\rho^*:A\longrightarrow\gl(V^*)$ and $\mu^*:A\longrightarrow\gl(V^*)$ as usual by
$$\langle \rho^*(x)(\xi),u\rangle=-\langle\xi,\rho(x)(u)\rangle,\quad \langle \mu^*(x)(\xi),u\rangle=-\langle\xi,\mu(x)(u)\rangle.$$
\begin{thm}\label{dual-representation}
Let $(A,\cdot)$ be an anti-pre-Lie algebra and $(V,\rho,\mu)$ a representation. Then $(V^*,\mu^*-\rho^*,\mu^*)$ is a representation of $(A,\cdot)$, which is called the {\bf dual representation} of $(V,\rho,\mu)$.
\end{thm}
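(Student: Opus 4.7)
The plan is to verify the three representation axioms \eqref{anti-pre-lie-rep-1}--\eqref{anti-pre-lie-rep-3} directly for the pair $\hat\rho := \mu^{*}-\rho^{*}$ and $\hat\mu := \mu^{*}$ acting on $V^{*}$. The strategy is to pair each candidate equality against an arbitrary test vector $u\in V$ and $\xi\in V^{*}$, using the sign convention $\langle\rho^{*}(x)\xi,u\rangle=-\langle\xi,\rho(x)u\rangle$ (and likewise for $\mu^{*}$), so that every identity in $\gl(V^{*})$ reduces to an identity in $\gl(V)$ that can be checked against the axioms of $(V,\rho,\mu)$. The only arithmetic one has to internalize is that composition dualizes with a sign: $\rho^{*}(x)\circ\mu^{*}(y)$, when evaluated at $(\xi,u)$, equals $\langle\xi,\mu(y)\rho(x)u\rangle$, so the order of operators reverses.

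For \eqref{anti-pre-lie-rep-1}, the cleanest route is to invoke the preceding proposition, which says $(V,\rho-\mu)$ is a representation of the sub-adjacent Lie algebra $A^{C}$. Expanding $\hat\rho(x)\hat\rho(y)$ after pairing produces the composition $(\mu-\rho)(y)(\mu-\rho)(x)$ inside $\langle\xi,\cdot\,u\rangle$, so that the bracket condition $[\hat\rho(x),\hat\rho(y)]=\hat\rho[y,x]$ is equivalent, after pairing, to $[(\rho-\mu)(x),(\rho-\mu)(y)]=(\rho-\mu)[x,y]$, which is exactly the Lie-representation statement just quoted.

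For \eqref{anti-pre-lie-rep-2}, I would compute both sides evaluated at $(\xi,u)$ and observe that after cancelling the term coming from $\mu^{*}(x)\mu^{*}(y)$ against $\mu^{*}(y)\mu^{*}(x)$ (they become, respectively, $\langle\xi,\mu(y)\mu(x)u\rangle$ and $\langle\xi,\mu(x)\mu(y)u\rangle$, one on each side), the remaining identity is nothing but \eqref{anti-pre-lie-rep-2} itself, paired with $-\xi$ on $u$. So axiom~\eqref{anti-pre-lie-rep-2} for $(V^{*},\hat\rho,\hat\mu)$ is essentially the dual of axiom~\eqref{anti-pre-lie-rep-2} for $(V,\rho,\mu)$.

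Axiom \eqref{anti-pre-lie-rep-3} is the only nontrivial step. After the same pairing procedure, it reduces to the operator identity
\[
\rho[x,y]-\mu[x,y]=\rho(y)\mu(x)-\rho(x)\mu(y)\quad\text{in }\gl(V).
\]
To prove this, use \eqref{anti-pre-lie-rep-2} for the pair $(x,y)$ and for the pair $(y,x)$ to obtain two formulas for $\mu(x\cdot y)$ and $\mu(y\cdot x)$; subtracting gives an expression for $\mu[x,y]$ as a combination of six terms, of which four are $\mu(y)\mu(x)-\mu(x)\mu(y)+\mu(y)\rho(x)-\mu(x)\rho(y)$. By \eqref{anti-pre-lie-rep-3} these four terms simplify to $\rho[x,y]$, and the resulting identity is precisely the one required. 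The main obstacle is simply bookkeeping of signs and the order-reversal under dualization; once those are handled carefully the verification is a matter of one combined application of \eqref{anti-pre-lie-rep-2} and \eqref{anti-pre-lie-rep-3}.
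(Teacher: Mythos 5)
Your proposal is correct and takes essentially the same route as the paper: pair each candidate identity against $\xi\in V^{*}$ and $u\in V$, use that dualization reverses compositions, and reduce everything to the axioms of $(V,\rho,\mu)$; in particular your key reduction of the third axiom to $\rho[x,y]-\mu[x,y]=\rho(y)\mu(x)-\rho(x)\mu(y)$, obtained by combining \eqref{anti-pre-lie-rep-2} for $(x,y)$ and $(y,x)$ with \eqref{anti-pre-lie-rep-3}, is exactly the computation the paper performs. One small transcription slip: the four terms you isolate from the subtraction should be $\mu(y)\rho(x)-\mu(x)\rho(y)-\mu(y)\mu(x)+\mu(x)\mu(y)$ (the $\mu\mu$ pair enters with the opposite sign to what you wrote); with that sign they do collapse to $\rho[x,y]$ by \eqref{anti-pre-lie-rep-3}, and the argument closes as you describe.
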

\begin{proof}
For all $x,y\in A,\xi\in V^*$ and $u\in V$, by \eqref{anti-pre-lie-rep-1}, \eqref{anti-pre-lie-rep-2} and \eqref{anti-pre-lie-rep-3}, we have
\begin{eqnarray*}
&&\langle((\mu^*-\rho^*)(x)(\mu^*-\rho^*)(y)-(\mu^*-\rho^*)(y)(\mu^*-\rho^*)(x)-(\mu^*-\rho^*)[y,x])(\xi),u\rangle\\
&=&\langle(\mu^*(x)\mu^*(y)-\mu^*(x)\rho^*(y)-\rho^*(x)\mu^*(y)+\rho^*(x)\rho^*(y)-\mu^*(y)\mu^*(x)+\mu^*(y)\rho^*(x)\\
&&+\rho^*(y)\mu^*(x)-\rho^*(y)\rho^*(x)-\mu^*[y,x]+\rho^*[y,x])(\xi),u\rangle\\
&=&\langle\xi,(\mu(y)\mu(x)-\rho(y)\mu(x)-\mu(y)\rho(x)+\rho(y)\rho(x)-\mu(x)\mu(y)+\rho(x)\mu(y)+\mu(x)\rho(y)\\
&&-\rho(x)\rho(y)+\mu[y,x]-\rho[y,x])(u)\rangle\\
&=&\langle\xi,(-\rho(y)\mu(x)+\rho(y)\rho(x)+\rho(x)\mu(y)-\rho(x)\rho(y)+\mu(y\cdot x)-\mu(x \cdot y))(u)\rangle\\
&=&\langle\xi,(\mu(x)\rho(y)-\mu(x)\mu(y)-\mu(y)\rho(x)+\mu(y)\mu(x)+\rho[x,y])(u)\rangle\\
&=&0,
\end{eqnarray*}
which implies that
\begin{equation}\label{dual-rep-1}
(\mu^*-\rho^*)(x)\circ(\mu^*-\rho^*)(y)-(\mu^*-\rho^*)(y)\circ(\mu^*-\rho^*)(x)=(\mu^*-\rho^*)[y,x].
\end{equation}
By \eqref{anti-pre-lie-rep-2}, we have
\begin{eqnarray*}
&&\langle(\mu^*(x\cdot y)-(\mu^*-\rho^*)(x)\mu^*(y)-\mu^*(y)(\mu^*-\rho^*)(x)+\mu^*(y)\mu^*(x))(\xi),u\rangle\\
&=&\langle(\mu^*(x\cdot y)-\mu^*(x)\mu^*(y)+\rho^*(x)\mu^*(y)+\mu^*(y)\rho^*(x))(\xi),u\rangle\\
&=&\langle\xi,(-\mu(x\cdot y)-\mu(y)\mu(x)+\mu(y)\rho(x)+\rho(x)\mu(y))(u)\rangle\\
&=&0,
\end{eqnarray*}
which implies that
\begin{equation}\label{dual-rep-2}
\mu^*(x\cdot y)-(\mu^*-\rho^*)(x)\circ\mu^*(y)=\mu^*(y)\circ(\mu^*-\rho^*)(x)-\mu^*(y)\circ\mu^*(x).
\end{equation}
By \eqref{anti-pre-lie-rep-2} and \eqref{anti-pre-lie-rep-3}, we have
\begin{eqnarray*}
&&\langle(\mu^*(y)\mu^*(x)-\mu^*(x)\mu^*(y)+(\mu^*-\rho^*)[x,y]-\mu^*(y)(\mu^*-\rho^*)(x)+\mu^*(x)(\mu^*-\rho^*)(y))(\xi),u\rangle\\
&=&\langle(\mu^*[x,y]-\rho^*[x,y]+\mu^*(y)\rho^*(x)-\mu^*(x)\rho^*(y))(\xi),u\rangle\\
&=&\langle\xi,(-\mu[x,y]+\rho[x,y]+\rho(x)\mu(y)-\rho(y)\mu(x))(u)\rangle\\
&=&\langle\xi,(-\mu(y)\rho(x)+\mu(y)\mu(x)+\mu(x)\rho(y)-\mu(x)\mu(y)+\rho[x,y])(u)\rangle\\
&=&0,
\end{eqnarray*}
which implies that
\begin{equation}\label{dual-rep-3}
\mu^*(y)\circ\mu^*(x)-\mu^*(x)\circ\mu^*(y)+(\mu^*-\rho^*)[x,y]=\mu^*(y)\circ(\mu^*-\rho^*)(x)-\mu^*(x)\circ(\mu^*-\rho^*)(y).
\end{equation}
By \eqref{dual-rep-1}, \eqref{dual-rep-2} and \eqref{dual-rep-3}, we deduce that $(V^*,\mu^*-\rho^*,\mu^*)$ is a representation of $(A,\cdot)$.
\end{proof}
\begin{cor}\label{dual-2-identity}
Let $(V,\rho,\mu)$ be a representation of an anti-pre-Lie algebra $(A,\cdot)$. Then the dual representation of $(V^*,\mu^*-\rho^*,\mu^*)$ is $(V,\rho,\mu).$
\end{cor}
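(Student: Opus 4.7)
The plan is to apply Theorem \ref{dual-representation} a second time, now to the representation $(V^{*},\mu^{*}-\rho^{*},\mu^{*})$, and check that after the canonical identification $V^{**}\cong V$ we recover the original triple $(V,\rho,\mu)$.

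First I would set $\tilde{\rho}\defbe \mu^{*}-\rho^{*}$ and $\tilde{\mu}\defbe \mu^{*}$, so that Theorem \ref{dual-representation} says $(V^{*},\tilde\rho,\tilde\mu)$ is a representation of $(A,\cdot)$. Applying the same theorem to this representation, the dual representation on $V^{**}$ is $(V^{**},\tilde\mu^{*}-\tilde\rho^{*},\tilde\mu^{*})$, where now the stars denote the dualization of operators on $V^{*}$, i.e.\ for $\phi\in\gl(V^{*})$ one has $\phi^{*}\in\gl(V^{**})$ defined by $\langle\phi^{*}(v),\xi\rangle=-\langle v,\phi(\xi)\rangle$ (under the pairing $\langle v,\xi\rangle=\langle\xi,v\rangle$).

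Next I would use the canonical isomorphism $V\cong V^{**}$ to rewrite each double dual. The key calculation is routine: for any linear map $\psi\in\gl(V)$, one has $(\psi^{*})^{*}=\psi$ under this identification, since $\langle(\psi^{*})^{*}(v),\xi\rangle=-\langle v,\psi^{*}(\xi)\rangle=\langle\psi(v),\xi\rangle$ for all $\xi\in V^{*}$ and $v\in V$. Dualization is also linear in the operator, so
\begin{equation*}
\tilde\mu^{*}=(\mu^{*})^{*}=\mu,\qquad \tilde\rho^{*}=(\mu^{*}-\rho^{*})^{*}=\mu-\rho.
\end{equation*}

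Substituting into $(V^{**},\tilde\mu^{*}-\tilde\rho^{*},\tilde\mu^{*})$ gives $(V,\mu-(\mu-\rho),\mu)=(V,\rho,\mu)$, which is exactly the original representation. There is essentially no obstacle here; the only subtlety worth a line in the write-up is being explicit about the identification $V^{**}\cong V$ and the sign convention $\langle\phi^{*}(\xi),u\rangle=-\langle\xi,\phi(u)\rangle$ used in Theorem \ref{dual-representation}, because the minus signs are what make the two applications of the construction compose to the identity.
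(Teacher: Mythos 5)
Your proposal is correct and is exactly the computation the paper has in mind (its proof is just ``It is straightforward''): dualize twice, use $(\psi^{*})^{*}=\psi$ under the canonical identification $V^{**}\cong V$, and observe that $\tilde\mu^{*}-\tilde\rho^{*}=\mu-(\mu-\rho)=\rho$. Your explicit attention to the sign convention and to the identification $V^{**}\cong V$ (which tacitly assumes $V$ finite-dimensional, as the statement itself requires) is a welcome addition rather than a deviation.
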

\begin{proof}
It is straightforward.
\end{proof}

Consider the dual representation of the regular representation, we have
\begin{cor}
 Let  $(A,\cdot)$ be an anti-pre-Lie algebra. Then $(A^*,R^*-L^*,R^*)$ is a representation of $(A,\cdot)$.
\end{cor}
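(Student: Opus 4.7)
The plan is to invoke Theorem \ref{dual-representation} directly with the regular representation taken as input. First, I would recall from the discussion preceding Proposition \ref{semi-direct-product} that the triple $(A, L, R)$, where $L_x y = x \cdot y$ and $R_x y = y \cdot x$, is a representation of the anti-pre-Lie algebra $(A, \cdot)$ on itself: this is the \textbf{regular representation}. So the hypotheses of Theorem \ref{dual-representation} are satisfied with $V = A$, $\rho = L$, and $\mu = R$.

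Next, I would apply Theorem \ref{dual-representation} verbatim to this triple. The theorem produces a dual representation of the form $(V^*, \mu^* - \rho^*, \mu^*)$, and in the present situation this is precisely $(A^*, R^* - L^*, R^*)$. Since the theorem has already verified the three representation identities \eqref{anti-pre-lie-rep-1}, \eqref{anti-pre-lie-rep-2}, \eqref{anti-pre-lie-rep-3} for the dualized pair, no further calculation is required.

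There is no real obstacle here; the only thing to be careful about is matching the roles of $\rho$ and $\mu$ correctly. One might be tempted to write the dual of $(A, L, R)$ as $(A^*, L^* - R^*, L^*)$ by symmetry, but Theorem \ref{dual-representation} fixes the convention that the first slot becomes $\mu^* - \rho^*$ and the second slot becomes $\mu^*$, so the correct answer is $(A^*, R^* - L^*, R^*)$. After this bookkeeping check, the proof is a one-line citation of the theorem.
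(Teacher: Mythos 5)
Your proposal is correct and matches the paper exactly: the corollary is obtained by applying Theorem \ref{dual-representation} to the regular representation $(A,L,R)$, yielding $(A^*,\mu^*-\rho^*,\mu^*)=(A^*,R^*-L^*,R^*)$. Your remark about keeping the roles of $\rho$ and $\mu$ straight is a sensible bookkeeping check, but nothing further is needed.
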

\begin{pro}
Let $(V,\rho,\mu)$ be a representation of an anti-pre-Lie algebra $(A,\cdot)$. Then the following conditions are equivalent:
\begin{itemize}
\item [$\rm(i)$]  $(V,\mu-\rho,\mu)$ is a representation of the anti-pre-Lie algebra $(A,\cdot)$,
\item[$\rm(ii)$] $(V^\ast,\rho^*,\mu^*)$ is a representation of the anti-pre-Lie algebra $(A,\cdot)$,
\item[$\rm(iii)$] $\mu(x\cdot y)+\mu(y\cdot x)=0$, for all $x,y\in A$.
\end{itemize}
\end{pro}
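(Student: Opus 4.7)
The plan is to establish the chain $(\mathrm{i}) \Leftrightarrow (\mathrm{ii}) \Leftrightarrow (\mathrm{iii})$. First, $(\mathrm{i}) \Leftrightarrow (\mathrm{ii})$ is immediate from the duality machinery already in place: the dual representation of $(V, \mu-\rho, \mu)$, in the sense of Theorem~\ref{dual-representation}, is
\[
(V^*, \mu^* - (\mu-\rho)^*, \mu^*) \;=\; (V^*, \rho^*, \mu^*),
\]
so Theorem~\ref{dual-representation} gives $(\mathrm{i}) \Rightarrow (\mathrm{ii})$ and Corollary~\ref{dual-2-identity} (or a second application of Theorem~\ref{dual-representation}) gives the reverse.

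For $(\mathrm{ii}) \Rightarrow (\mathrm{iii})$, I would write out axiom \eqref{anti-pre-lie-rep-2} for $(V^*, \rho^*, \mu^*)$, pair both sides with arbitrary $\xi \in V^*$ and $u \in V$ exactly as in the proof of Theorem~\ref{dual-representation}, and convert it to the operator identity $\mu(x\cdot y) = \mu(x)\mu(y) - \rho(x)\mu(y) - \mu(y)\rho(x)$ on $V$. Adding this to the original \eqref{anti-pre-lie-rep-2} yields $2\mu(x\cdot y) = [\mu(x), \mu(y)]$; swapping $x$ and $y$ and using antisymmetry of the commutator immediately produces $(\mathrm{iii})$.

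For $(\mathrm{iii}) \Rightarrow (\mathrm{ii})$, I would verify the three representation axioms \eqref{anti-pre-lie-rep-1}--\eqref{anti-pre-lie-rep-3} for $(V^*, \rho^*, \mu^*)$. Axiom \eqref{anti-pre-lie-rep-1} dualizes automatically from the original \eqref{anti-pre-lie-rep-1}. For the remaining two, the main tool is to substitute $\mu(y\cdot x) = -\mu(x\cdot y)$ from $(\mathrm{iii})$ into the original \eqref{anti-pre-lie-rep-2} at $(y,x)$, producing an alternative expression for $\mu(x\cdot y)$; combining this with \eqref{anti-pre-lie-rep-3} and with the derived identity $\mu[x,y] = \rho[x,y] + \rho(x)\mu(y) - \rho(y)\mu(x)$, obtained by subtracting the two instances of \eqref{anti-pre-lie-rep-2} at $(x,y)$ and $(y,x)$ and invoking \eqref{anti-pre-lie-rep-3}, should yield the dualized forms of both axioms after pairing against $\xi$ and $u$.

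I expect the main obstacle to be the verification of the dualized \eqref{anti-pre-lie-rep-3}: after the pairing against $\xi$ and $u$ it becomes, on $V$, the symmetry identity $\rho(x)\mu(y) - \rho(y)\mu(x) = \mu(x)\rho(y) - \mu(y)\rho(x)$, equivalently $\mu[x,y] = [\mu(x),\mu(y)]$; deriving this from $(\mathrm{iii})$ together with \eqref{anti-pre-lie-rep-2} and \eqref{anti-pre-lie-rep-3} is the technical heart of the argument, and I expect it to require combining the anticommutator relation $\{\mu(x),\mu(y)\} = \rho(x)\mu(y) + \mu(y)\rho(x) + \rho(y)\mu(x) + \mu(x)\rho(y)$ produced by adding the two instances of \eqref{anti-pre-lie-rep-2} under $(\mathrm{iii})$ with the commutator relation supplied by \eqref{anti-pre-lie-rep-3}.
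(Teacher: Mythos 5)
Your reduction of (i) to (ii) via Theorem \ref{dual-representation} and Corollary \ref{dual-2-identity} is correct and matches the paper, and your derivation of (iii) from (ii) is also correct --- indeed it slightly streamlines the paper's computation, since adding the dualized form of \eqref{anti-pre-lie-rep-2} to the original gives $2\mu(x\cdot y)=[\mu(x),\mu(y)]$ and antisymmetry of the commutator finishes the job without invoking \eqref{anti-pre-lie-rep-3} at all.

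The gap is in (iii) $\Rightarrow$ (ii), precisely at the point you flag as the technical heart, and it cannot be closed: the implication is false. Write $S(x,y)=\rho(x)\mu(y)+\mu(y)\rho(x)$; what you need is the symmetry $S(x,y)=S(y,x)$, equivalently $\mu[x,y]=[\mu(x),\mu(y)]$. Your anticommutator relation says only that $S(x,y)+S(y,x)=\mu(x)\mu(y)+\mu(y)\mu(x)$, while the relation you would extract from \eqref{anti-pre-lie-rep-2} together with (iii) is $S(x,y)-S(y,x)=\mu[x,y]-[\mu(x),\mu(y)]$, and feeding \eqref{anti-pre-lie-rep-3} into the derived identity $\mu[x,y]=\rho[x,y]+\rho(x)\mu(y)-\rho(y)\mu(x)$ reproduces this same equation; the system is circular and never forces the difference to vanish. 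In fact a counterexample exists. Take $A$ two-dimensional with basis $e_1,e_2$ and the zero product, so that (iii) holds vacuously, let $V=\mathbb{K}^3$, and set
\[
\rho(e_1)=\rho(e_2)=E_{13},\qquad \mu(e_1)=E_{13}+E_{31}+E_{21},\qquad \mu(e_2)=E_{13}+E_{31},
\]
with $E_{ij}$ the matrix units. For the zero product the axioms \eqref{anti-pre-lie-rep-1}--\eqref{anti-pre-lie-rep-3} reduce to $[\rho(x),\rho(y)]=0$, $\mu(y)\mu(x)=\rho(x)\mu(y)+\mu(y)\rho(x)$ and $[\mu(y),\mu(x)]=\mu(y)\rho(x)-\mu(x)\rho(y)$, all of which hold here, so $(V,\rho,\mu)$ is a genuine representation. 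But condition (i) forces $2\mu(x\cdot y)=[\mu(x),\mu(y)]$ (the paper's own \eqref{equivalent1}), hence would require $[\mu(e_1),\mu(e_2)]=0$, whereas $[\mu(e_1),\mu(e_2)]=E_{23}\neq 0$; so (i) and (ii) fail while (iii) holds. The defect is therefore in the proposition itself --- the paper disposes of this direction with ``the converse part can be proved similarly'' --- and the condition actually equivalent to (i) and (ii) is $2\mu(x\cdot y)=\mu(x)\mu(y)-\mu(y)\mu(x)$ for all $x,y$, which implies (iii) but is strictly stronger.
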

\begin{proof}
By Theorem \ref{dual-representation} and Corollary \ref{dual-2-identity}, we obtain that condition $\rm(i)$ is equivalent to condition $\rm(ii)$. If $(V,\mu-\rho,\mu)$ is a representation of $(A,\cdot)$, by \eqref{anti-pre-lie-rep-2}, for all $x,y\in A$, we have
\begin{eqnarray*}
0&=&\mu(x\cdot y)-(\mu-\rho)(x)\circ\mu(y)-\mu(y)\circ(\mu-\rho)(x)+\mu(y)\circ\mu(x)\\
&=&\mu(x\cdot y)-\mu(x)\circ\mu(y)+\rho(x)\circ\mu(y)+\mu(y)\circ\rho(x)\\
&=&2\mu(x\cdot y)-\mu(x)\circ\mu(y)+\mu(y)\circ\mu(x),
\end{eqnarray*}
which implies that
\begin{equation}\label{equivalent1}
2\mu(x\cdot y)-\mu(x)\circ\mu(y)+\mu(y)\circ\mu(x)=0.
\end{equation}
By \eqref{anti-pre-lie-rep-3}, for all $x,y\in A$, we have
\begin{eqnarray*}
0&=&\mu(y)\circ\mu(x)-\mu(x)\circ\mu(y)+(\mu-\rho)[x,y]-\mu(y)\circ(\mu-\rho)(x)+\mu(x)\circ(\mu-\rho)(y)\\
&=&\mu[x,y]-\rho[x,y]+\mu(y)\circ\rho(x)-\mu(x)\circ\rho(y)\\
&=&\mu[x,y]+\mu(y)\circ\mu(x)-\mu(x)\circ\mu(y),
\end{eqnarray*}
which implies that
\begin{equation}\label{equivalent2}
\mu[x,y]+\mu(y)\circ\mu(x)-\mu(x)\circ\mu(y)=0.
\end{equation}
By \eqref{equivalent1} and \eqref{equivalent2}, we have $\mu(x\cdot y)+\mu(y\cdot x)=0$. The converse part can be proved similarly. We omit details. Thus, we deduce that condition $\rm(i)$ is equivalent to condition $\rm(iii)$.
\end{proof}
Let $(V,\rho,\mu)$ be a representation of an anti-pre-Lie algebra $(A,\cdot)$. The set of $n$-cochains is given by
\begin{equation*}
  C^n(A;V)=\Hom(\wedge^n A,V),\quad
 \forall n\geq 0.
\end{equation*}
Now, we define $1$-coboundary operator and $2$-coboundary operator of $(A,\cdot)$ with respect to the representation $(V,\rho,\mu)$.  For all $f\in C^1(A;V)$ and $x,y \in A$, define $\dM^1:C^1(A;V)\longrightarrow C^2(A;V)$ by
\begin{equation*}
\dM^1(f)(x,y)=\rho(x)f(y)+\mu(y)f(x)-f(x\cdot y).
\end{equation*}
For all $f\in C^2(A;V)$ and $x,y,z \in A$, a $2$-coboundary operator of $(A,\cdot)$ on $V$ consists a pair of maps $(\dM^2_1,\dM^2_2)$, define $\dM^2_i:C^2(A;V)\longrightarrow C^3(A;V)$ by
\begin{eqnarray*}
  \dM^2_1(f)(x,y,z)&=&\rho(x)f(y,z)-\rho(y)f(x,z)-\mu(z)f(y,x)+\mu(z)f(x,y)\\
  &&-f(y,x\cdot z)+f(x,y\cdot z)+f([x,y],z),
\end{eqnarray*}
and
\begin{eqnarray*}
  \dM^2_2(f)(x,y,z)&=&\mu(x)(f(y,z)-f(z,y))+\mu(y)(f(z,x)-f(x,z))+\mu(z)(f(x,y)-f(y,x))\\
  &&+f([x,y],z)+f([y,z],x)+f([z,x],y).
\end{eqnarray*}
We denote the set of closed $2$-cochains by $Z^2(A;V)$ and the set of exact $2$-cochains by $B^2(A;V)$.
\begin{pro}
With the above notations, we have $B^2(A;V)\subset Z^2(A;V)$.
\end{pro}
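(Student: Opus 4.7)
The plan is to show directly that for every $f \in C^1(A;V)$ we have $\dM^2_1(\dM^1 f) = 0$ and $\dM^2_2(\dM^1 f) = 0$, since $B^2(A;V)$ consists precisely of elements of the form $\dM^1 f$ and $Z^2(A;V) = \ker\dM^2_1 \cap \ker\dM^2_2$ (the coboundary is required to be closed under both pieces).

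First I would expand $\dM^2_1(\dM^1 f)(x,y,z)$. Each of the three operator terms $\rho(x)(\dM^1 f)(y,z)$, $\rho(y)(\dM^1 f)(x,z)$, $\mu(z)(\dM^1 f)(y,x)$, $\mu(z)(\dM^1 f)(x,y)$ produces three summands, and each of the three $f$-terms $f(y,x\cdot z)$, $f(x,y\cdot z)$, $f([x,y],z)$ produces three summands via the definition of $\dM^1 f$. I would then sort the resulting expression into four classes of terms: composed-operator terms of shapes $\rho\rho f$, $\rho\mu f$, $\mu\rho f$, $\mu\mu f$ acting on $f(x), f(y), f(z)$; and residual single-operator terms such as $\rho(x)f(y\cdot z)$ or $\mu(z)f(x\cdot y)$. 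The composed-operator terms on $f(z)$ collapse via the representation identity \eqref{anti-pre-lie-rep-1}, while those on $f(x)$ and $f(y)$ collapse using \eqref{anti-pre-lie-rep-2} and \eqref{anti-pre-lie-rep-3}; the residual $f$-terms then cancel by virtue of the defining anti-pre-Lie identity \eqref{anti-pre-lie-1} applied inside $f$.

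For $\dM^2_2(\dM^1 f)(x,y,z)$, the same strategy works but is cleaner because of cyclic symmetry. Expanding, the operator terms organize into cyclic sums of $\mu(x)\rho(y)f(z)$, $\mu(x)\mu(z)f(y)$, etc., and these vanish by cyclic application of \eqref{anti-pre-lie-rep-2} and \eqref{anti-pre-lie-rep-3}; the surviving $f$-terms form the cyclic sum $f([x,y]\cdot z + [y,z]\cdot x + [z,x]\cdot y)$ plus contributions of the form $\mu(z)f(x\cdot y) - \mu(z)f(y\cdot x) + f([x,y],z)$-type pieces, which vanish by the second anti-pre-Lie axiom \eqref{anti-pre-lie-2} together with the bracket definition $[x,y] = x\cdot y - y\cdot x$.

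The main obstacle is purely bookkeeping: keeping the signs and the six-to-nine simultaneous cancellations straight. No conceptual difficulty is expected, since each identity used is one of the representation axioms \eqref{anti-pre-lie-rep-1}--\eqref{anti-pre-lie-rep-3} or one of the two anti-pre-Lie axioms \eqref{anti-pre-lie-1}--\eqref{anti-pre-lie-2}; the proof amounts to verifying that every term in the expansion is matched by exactly one of these identities. To keep the presentation compact I would display only the grouped intermediate expressions and cite the relevant equation number at each cancellation, rather than writing out every atomic summand.
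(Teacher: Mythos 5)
Your proposal is correct and follows essentially the same route as the paper: expand $\dM^2_1(\dM^1 f)$ and $\dM^2_2(\dM^1 f)$, group the composed-operator terms acting on $f(x)$, $f(y)$, $f(z)$, and cancel them with \eqref{anti-pre-lie-rep-1}--\eqref{anti-pre-lie-rep-3}, with the residual $f$-terms vanishing by \eqref{anti-pre-lie-1}, \eqref{anti-pre-lie-2} and the definition of the bracket. The only slight imprecision is the attribution of identities: in the $\dM^2_1$ computation the $f(x)$- and $f(y)$-groups need only \eqref{anti-pre-lie-rep-2}, while \eqref{anti-pre-lie-rep-3} is what kills the operator terms in the $\dM^2_2$ computation.
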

\begin{proof}
For all $f\in C^1(A;V)$, $\dM^1f\in B^2(A;V)$, by \eqref{anti-pre-lie-1}, \eqref{anti-pre-lie-rep-1} and \eqref{anti-pre-lie-rep-2}, for all $x,y,z\in A$, we have
\begin{eqnarray*}
&&\dM^2_1(\dM^1f)(x,y,z)\\
&=&\rho(x)(\dM^1f)(y,z)-\rho(y)(\dM^1f)(x,z)-\mu(z)(\dM^1f)(y,x)+\mu(z)(\dM^1f)(x,y)\\
&&-(\dM^1f)(y,x\cdot z)+(\dM^1f)(x,y\cdot z)+(\dM^1f)([x,y],z)\\
&=&\rho(x)\rho(y)f(z)+\rho(x)\mu(z)f(y)-\rho(x)f(y\cdot z)-\rho(y)\rho(x)f(z)-\rho(y)\mu(z)f(x)+\rho(y)f(x\cdot z)\\
&&-\mu(z)\rho(y)f(x)-\mu(z)\mu(x)f(y)+\mu(z)f(y\cdot x)+\mu(z)\rho(x)f(y)+\mu(z)\mu(y)f(x)-\mu(z)f(x\cdot y)\\
&&-\rho(y)f(x\cdot z)-\mu(x\cdot z)f(y)+f(y\cdot(x\cdot z))+\rho(x)f(y\cdot z)+\mu(y\cdot z)f(x)-f(x\cdot(y\cdot z))\\
&&+\rho[x,y]f(z)+\mu(z)f([x,y])-f([x,y]\cdot z)\\
&=&0.
\end{eqnarray*}
By \eqref{anti-pre-lie-2} and \eqref{anti-pre-lie-rep-3}, we have
\begin{eqnarray*}
&&\dM^2_2(\dM^1f)(x,y,z)\\
&=&\mu(x)((\dM^1f)(y,z)-(\dM^1f)(z,y))+\mu(y)((\dM^1f)(z,x)-(\dM^1f)(x,z))+\mu(z)((\dM^1f)(x,y)\\
&&-(\dM^1f)(y,x))+(\dM^1f)([x,y],z)+(\dM^1f)([y,z],x)+(\dM^1f)([z,x],y)\\
&=&\mu(x)\rho(y)f(z)+\mu(x)\mu(z)f(y)-\mu(x)f(y\cdot z)-\mu(x)\rho(z)f(y)-\mu(x)\mu(y)f(z)+\mu(x)f(z\cdot y)\\
&&+\mu(y)\rho(z)f(x)+\mu(y)\mu(x)f(z)-\mu(y)f(z\cdot x)-\mu(y)\rho(x)f(z)-\mu(y)\mu(z)f(x)+\mu(y)f(x\cdot z)\\
&&+\mu(z)\rho(x)f(y)+\mu(z)\mu(y)f(x)-\mu(z)f(x\cdot y)-\mu(z)\rho(y)f(x)-\mu(z)\mu(x)f(y)+\mu(z)f(y\cdot x)\\
&&+\rho[x,y]f(z)+\mu(z)f([x,y])-f([x,y]\cdot z)+\rho[y,z]f(x)+\mu(x)f([y,z])-f([y,z]\cdot x)\\
&&+\rho[z,x]f(y)+\mu(y)f([z,x])-f([z,x]\cdot y)\\
&=&0.
\end{eqnarray*}
Thus, we obtain that $B^2(A;V)\subset Z^2(A;V)$.
\end{proof}

We denote by $H^2(A;V)=Z^2(A;V)/B^2(A;V)$ the corresponding cohomology groups of the  anti-pre-Lie algebra $(A,\cdot)$ with the coefficient in the representation $(V,\rho,\mu)$.
\section{Linear deformations of anti-pre-Lie algebras}\label{sec:linear}
In this section, we study linear deformations of anti-pre-Lie algebras using the cohomology defined in the previous section, and introduce the notion of a Nijenhuis operator on an anti-pre-Lie algebra. We show that a Nijenhuis operator gives rise to a trivial deformation.
\begin{defi}
Let $(A,\cdot)$ be an anti-pre-Lie algebra. Consider a $t$-parametrized family of multiplication operations:
\begin{equation}
\nonumber x\cdot_t y=x\cdot y+t\Omega(x,y),\quad \forall x,y \in A,
\end{equation}
 where $\Omega:\otimes^2A\longrightarrow A$ is a linear map. If $(A,\cdot_t)$ is still an anti-pre-Lie algebra for all $t$, we say that $\Omega$ generates a {\bf linear deformation} of the anti-pre-Lie algebra $(A,\cdot)$.
\end{defi}
It is direct to check that $\Omega$ generates a linear deformation of the anti-pre-Lie algebra $(A,\cdot)$ if and only if
 for any $x,y,z\in A$, the following equalities are satisfied:
\begin{eqnarray}
\label{deformation-1}x\cdot\Omega(y,z)+\Omega(x,y\cdot z)-y\cdot\Omega(x,z)-\Omega(y,x\cdot z)&&\\
\nonumber -\Omega(y,x)\cdot z-\Omega(y\cdot x,z)+\Omega(x,y)\cdot z+\Omega(x\cdot y,z)&=&0,\\
\label{deformation-2} \Omega(x,y)\cdot z+\Omega(x\cdot y,z)-\Omega(y,x)\cdot z-\Omega(y\cdot x,z)&&\\
\nonumber+\Omega(y,z)\cdot x+\Omega(y\cdot z,x)-\Omega(z,y)\cdot x-\Omega(z\cdot y,x)&&\\
\nonumber+\Omega(z,x)\cdot y+\Omega(z\cdot x,y)-\Omega(x,z)\cdot y-\Omega(x\cdot z,y)&=&0,\\
\label{deformation-3}\Omega(x,\Omega(y,z))-\Omega(y,\Omega(x,z))-\Omega(\Omega(y,x),z)+\Omega(\Omega(x,y),z)&=&0,\\
\label{deformation-4}\Omega(\Omega(x,y)-\Omega(y,x),z)+\Omega(\Omega(y,z)-\Omega(z,y),x)+\Omega(\Omega(z,x)-\Omega(x,z),y)&=&0.
\end{eqnarray}
Obviously, \eqref{deformation-1} and \eqref{deformation-2} mean that $\Omega$ is a $2$-cocycle of the anti-pre-Lie algebra $(A,\cdot)$ with the coefficient in the regular representation $(A,L,R)$, \eqref{deformation-3} and \eqref{deformation-4} mean that $(A,\Omega)$ is  an anti-pre-Lie algebra.
\begin{defi}
Let $(A,\cdot_t)$ and $(A,\cdot'_t)$ be two linear deformations of the anti-pre-Lie algebra $(A,\cdot)$, where $x\cdot_t y=x\cdot y+t\Omega(x,y)$ and $x\cdot'_t y=x\cdot y+t\Omega'(x,y)$. We call them {\bf equivalent} if there exists $N\in \gl(A)$ such that ${\Id_A}+tN$ is a homomorphism from the anti-pre-Lie algebra $(A,\cdot'_t)$ to the anti-pre-Lie algebra $(A,\cdot_t)$, i.e. for all $x,y\in A$, the following equation hold:
\begin{equation*}
({\Id_A}+tN)(x\cdot'_t y)=({\Id_A}+tN)(x)\cdot_t({\Id_A}+tN)(y).
\end{equation*}
In particular, a linear deformation of the anti-pre-Lie algebra $(A,\cdot)$ is said to be {\bf trivial} if it is equivalent to the anti-pre-Lie algebra $(A,\cdot)$.
\end{defi}
We can deduce that anti-pre-Lie algebra $(A,\cdot_t)$ is a trivial deformation if and only if for all $x,y\in A$, the following equations hold:
\begin{eqnarray}
\label{equi-deformation-1}\Omega(x,y)&=&x\cdot N(y)+N(x)\cdot y-N(x\cdot y),\\
\label{equi-deformation-2}N(\Omega(x,y))&=&N(x)\cdot N(y).
\end{eqnarray}
Note that \eqref{equi-deformation-1} means that $\Omega=\dM^1 N$. By \eqref{equi-deformation-1} and \eqref{equi-deformation-2}, we give the notion of Nijenhuis operator of anti-pre-Lie algebras as follow:
\begin{defi}\label{Nijenhuis-operator}
Let $(A,\cdot)$ be an anti-pre-Lie algebra. A linear operator $N\in \gl(A)$ is called a {\bf Nijenhuis operator} on $(A,\cdot)$ if $N$ satisfies the following equation
\begin{equation}\label{Nijenhuis-operator1}
 N(x)\cdot N(y)=N(x\cdot_N y),\quad \forall x,y\in A,
\end{equation}
where the product $\cdot_N$ is defined by
\begin{equation}\label{Nijenhuis-operator}
x\cdot_N y\triangleq  x\cdot N(y)+N(x)\cdot y-N(x\cdot y).
\end{equation}
\end{defi}

By \eqref{equi-deformation-1} and \eqref{equi-deformation-2}, a trivial linear deformation of an anti-pre-Lie algebra gives rise to a Nijenhuis operator $N$. Conversely, a Nijenhuis operator $N$ can also generate a trivial linear deformation as the following theorem shows.
\begin{thm}\label{Nij-deformation}
Let N be a Nijenhuis operator on the anti-pre-Lie algebra $(A,\cdot)$. Then a linear deformation $(A,\cdot_t)$ of the anti-pre-Lie algebra $(A,\cdot)$ can be obtained by putting
\begin{equation*}
\Omega(x,y)=x\cdot_N y.
\end{equation*}
Furthermore, this linear deformation is trivial.
\end{thm}
\begin{proof}
Obviously, $\dM^2_1\Omega=0$ and $\dM^2_2\Omega=0$. Thus, $\Omega$ is a $2$-cocycle of the anti-pre-Lie algebra $(A,\cdot)$ with the coefficient in the regular representation $(A,L,R)$. For all $x,y,z\in A$, we have
\begin{eqnarray*}&&\Omega(x,\Omega(y,z))-\Omega(y,\Omega(x,z))-\Omega(\Omega(y,x),z)+\Omega(\Omega(x,y),z)\\
&=&x\cdot N(y\cdot N(z))+x\cdot N(N(y)\cdot z)-x\cdot N^2(y\cdot z)+N(x)\cdot (y\cdot N(z))\\
&&+N(x)\cdot (N(y)\cdot z)-N(x)\cdot N(y\cdot z)-N(x\cdot (y\cdot N(z)))-N(x\cdot (N(y)\cdot z))\\
&&+N(x\cdot N(y\cdot z))-y\cdot N(x\cdot N(z))-y\cdot N(N(x)\cdot z)+y\cdot N^2(x\cdot z)\\
&&-N(y)\cdot (x\cdot N(z))-N(y)\cdot (N(x)\cdot z)+N(y)\cdot N(x\cdot z)+N(y\cdot (x\cdot N(z)))\\
&&+N(y\cdot (N(x)\cdot z))-N(y\cdot N(x\cdot z))-(y\cdot N(x))\cdot N(z)-(N(y)\cdot x)\cdot N(z)\\
&&+N(y\cdot x)\cdot N(z)-N(y\cdot N(x))\cdot z-N(N(y)\cdot x)\cdot z+N^2(y\cdot x)\cdot z\\
&&+N((y\cdot N(x))\cdot z)+N((N(y)\cdot x)\cdot z)-N(N(y\cdot x)\cdot z)+(x\cdot N(y))\cdot N(z)\\
&&+(N(x)\cdot y)\cdot N(z)-N(x\cdot y)\cdot N(z)+N(x\cdot N(y))\cdot z+N(N(x)\cdot y)\cdot z\\
&&-N^2(x\cdot y)\cdot z-N((x\cdot N(y))\cdot z)-N((N(x)\cdot y)\cdot z)+N(N(x\cdot y)\cdot z).
\end{eqnarray*}
By \eqref{anti-pre-lie-1}, we have
\begin{eqnarray*}
 &&N(x)\cdot (y\cdot N(z))-y\cdot (N(x)\cdot N(z))-(y\cdot N(x))\cdot N(z)+(N(x)\cdot y)\cdot N(z)=0,\\
 &&N(y)\cdot (x\cdot N(z))-x\cdot (N(y)\cdot N(z))-(x\cdot N(y))\cdot N(z)+(N(y)\cdot x)\cdot N(z)=0,\\
 &&N(x)\cdot (N(y)\cdot z)-N(y)\cdot (N(x)\cdot z)-(N(y)\cdot N(x))\cdot z+(N(x)\cdot N(y))\cdot z =0.
\end{eqnarray*}
By \eqref{anti-pre-lie-1} and \eqref{Nijenhuis-operator1}, we have
\begin{eqnarray*}&&-N(x)\cdot N(y\cdot z)+N(y)\cdot N(x\cdot z)+N(y\cdot x)\cdot N(z)-N(x\cdot y)\cdot N(z)\\
&=&-N(x\cdot N(y\cdot z))-N(N(x)\cdot (y\cdot z))+N(y\cdot N(x\cdot z))+N(N(y)\cdot (x\cdot z))\\
&&+N((y\cdot x)\cdot N(z))+N(N(y\cdot x)\cdot z)-N((x\cdot y)\cdot N(z))-N(N(x\cdot y)\cdot z).
\end{eqnarray*}
Therefore, by \eqref{anti-pre-lie-1}, we have
\begin{eqnarray}\label{linear-anti-pre-1}&&\Omega(x,\Omega(y,z))-\Omega(y,\Omega(x,z))-\Omega(\Omega(y,x),z)+\Omega(\Omega(x,y),z)\\
\nonumber&=&-N(x \cdot (y\cdot N(z)))-N(x\cdot (N(y)\cdot z))+N(y\cdot (x\cdot N(z)))+N(y\cdot(N(x)\cdot z))\\
\nonumber&&+N((y\cdot N(x))\cdot z)+N((N(y)\cdot x)\cdot z)-N((x\cdot N(y))\cdot z)-N((N(x)\cdot y)\cdot z)\\
\nonumber&&-N(N(x)\cdot(y\cdot z))+N(N(y)\cdot(x\cdot z))+N((y\cdot x)\cdot N(z))-N((x\cdot y)\cdot N(z))\\
\nonumber&=&0.
\end{eqnarray}
Similarly, by \eqref{anti-pre-lie-2} and \eqref{Nijenhuis-operator1}, we obtain that
\begin{equation}\label{linear-anti-pre-2}
\Omega(\Omega(x,y)-\Omega(y,x),z)+\Omega(\Omega(y,z)-\Omega(z,y),x)+\Omega(\Omega(z,x)-\Omega(x,z),y)=0.
\end{equation}
By \eqref{linear-anti-pre-1} and \eqref{linear-anti-pre-2}, we obtain that $(A,\Omega)$ is an anti-pre-Lie algebra. Thus, $\Omega$ generated a linear deformation of the anti-pre-Lie algebra $(A,\cdot)$.

It is straightforward to deduce that ${\Id_A}+tN$ is a homomorphism from the anti-pre-Lie algebra $(A,\cdot_t)$ to the anti-pre-Lie algebra $(A,\cdot)$. Thus, the linear deformation is trivial.
\end{proof}
\begin{cor}
Let $N$ be a Nijenhuis operator on the anti-pre-Lie algebra $(A,\cdot)$, then $(A,\cdot_N)$ is an anti-pre-Lie algebra, and $N$ is a homomorphism from $(A,\cdot_N)$ to
$(A,\cdot)$.
\end{cor}
At the end of this section, we recall linear deformations of Lie algebras and Nijenhuis operators on Lie algebras, which give trivial deformations of Lie algebras.
\begin{defi}
Let $(\g,[\cdot,\cdot])$ be a Lie algebra and $\Omega\in \huaC^2(\g;\g)$ a skew-symmetric bilinear operator. Consider a $t$-parameterized family of bilinear operations $$[\cdot,\cdot]_t=[\cdot,\cdot]+t\Omega.$$ If $(\g,[\cdot,\cdot]_t)$ is a Lie algebra for all $t$, we say that $\Omega$ generates a {\bf (one-parameter) linear deformation} of a Lie algebra $(\g,[\cdot,\cdot])$.
    \end{defi}
\begin{defi}
Let $(\g,[\cdot,\cdot])$ be a Lie algebra. A linear operator $N\in \gl(A)$ is called a {\bf Nijenhuis operator} on $(\g,[\cdot,\cdot])$ if we have
\begin{equation}
\label{homo-4}[N(x),N(y)]=N[x,y]_N,\quad \forall x,y\in \g.
\end{equation}
where the bracket $[\cdot,\cdot]_N$ is defined by
\begin{equation}
[x,y]_N\triangleq  [N(x),y]+[x,N(y)]-N[x,y].
\end{equation}
    \end{defi}
\begin{pro}
If $\Omega \in C^2(A;A)$ generates a linear deformation of an anti-pre-Lie algebra $(A,\cdot)$, then $\Omega_C \in \huaC^2(A^C;A)$ defined by$$\Omega_C(x,y)=\Omega(x,y)-\Omega(y,x)$$ generates a linear deformation of the sub-adjacent -Lie algebra $A^C$.
\end{pro}

\begin{proof}
Assume that $\Omega$ generates a linear deformation of an anti-pre-Lie algebra $(A,\cdot)$. Then $(A,\cdot_t)$ is an anti-pre-Lie algebra. Consider its corresponding sub-adjacent Lie algebra $(A,[\cdot,\cdot]_t)$, we have
\begin{eqnarray*}
  [x,y]_t&=&x\cdot_t y-y\cdot_t x\\
  &=&x\cdot y+t\Omega(x,y)-y\cdot x-t\Omega(y,x)\\
  &=&[x,y]_C+t\Omega_C(x,y).
\end{eqnarray*}
Thus, $\Omega_C$ generates a linear deformation of $A^C$.
\end{proof}

\begin{pro}
If $N$ is a Nijenhuis operator on an anti-pre-Lie algebra $(A,\cdot)$, then $N$ is a Nijenhuis operator on the sub-adjacent Lie algebra $A^C$.
\end{pro}
\begin{proof}
 For all $x,y\in A$, we have
\begin{eqnarray*}
  [N(x),N(y)]_C&=&N(x)\cdot N(y)-N(y)\cdot N(x)\\
  &=&N\big(N(x)\cdot y+x\cdot N(y)-N(x\cdot y)-N(y)\cdot x-y\cdot N(x)+N(y\cdot x)\big)\\
  &=&N([N(x),y]_C+[x,N(y)]_C-N[x,y]_C).
\end{eqnarray*}
Thus, $N$ is a Nijenhuis operator on the sub-adjacent Lie algebra $A^C$.
\end{proof}

\section{Formal deformations of anti-pre-Lie algebras}\label{sec:def}
In this section, we study formal deformations of anti-pre-Lie algebras. We show that the infinitesimal of a formal deformation is a 2-cocycle and depends only on its cohomology class. Moreover, if the second cohomology group $H^2(A;A)$ is trivial, then the anti-pre-Lie algebra is rigid.

In the sequel, we will denote the anti-pre-Lie multiplication $\cdot$ by $\omega$
\begin{defi}
Let $(A,\omega)$ be an anti-pre-Lie algebra and $\omega_t=\omega+\sum_{i=1}^{+\infty} \omega_it^i:A[[t]]\otimes A[[t]]\longrightarrow A[[t]]$ a $\mathbb K[[t]]$-bilinear map, where $\omega_i:A\otimes A\longrightarrow A$ is a linear map. If $(A[[t]],\omega_t)$ is still an anti-pre-Lie algebra, we say that $\{\omega_i\}_{i\geq1}$ generates a {\bf $1$-parameter formal deformation} of an anti-pre-Lie algebra $(A,\omega)$.
\end{defi}

If $\{\omega_i\}_{i\geq1}$ generates a $1$-parameter formal deformation of an anti-pre-Lie algebra $(A,\omega)$, for all $x,y,z\in A$ and $n=1,2,\dots$,  we have
\begin{equation}\label{cocycle-1}
\sum_{i+j=n\atop i,j\geq 0}\omega_i(x,\omega_j(y,z))-\omega_i(y,\omega_j(x,z))-\omega_i(\omega_j(y,x),z)+\omega_i(\omega_j(x,y),z)=0.
\end{equation}
Moreover, we have
\begin{eqnarray}\label{n-sum-cocycle-1}
&&\sum_{i+j=n\atop 0<i,j\leq n-1 }\omega_i(x,\omega_j(y,z))-\omega_i(y,\omega_j(x,z))-\omega_i(\omega_j(y,x),z)+\omega_i(\omega_j(x,y),z)\\
\nonumber&=&-\dM^2_1\omega_n(x,y,z).
\end{eqnarray}
For all $x,y,z\in A$ and $n=1,2,\dots$,  we have
\begin{equation}\label{cocycle-2}
\sum_{i+j=n\atop i,j\geq 0}\omega_i(\omega_j(x,y)-\omega_j(y,x),z)+\omega_i(\omega_j(y,z)-\omega_j(z,y),x)+\omega_i(\omega_j(z,x)-\omega_j(x,z),y)=0.
\end{equation}
Moreover, we have
\begin{eqnarray}\label{n-sum-cocycle-2}
&&\sum_{i+j=n\atop 0<i,j\leq n-1}\omega_i(\omega_j(x,y)-\omega_j(y,x),z)+\omega_i(\omega_j(y,z)-\omega_j(z,y),x)+\omega_i(\omega_j(z,x)-\omega_j(x,z),y)\\
\nonumber&=&-\dM^2_2\omega_n(x,y,z).
\end{eqnarray}

\begin{pro}\label{pro:2-cocycle-regular-rep}
Let $\omega_t=\omega+\sum_{i=1}^{+\infty} \omega_it^i$ be a $1$-parameter formal deformation of an anti-pre-Lie algebra $(A,\omega)$. Then $\omega_1$ is a $2$-cocycle of the anti-pre-Lie algebra $(A,\omega)$ with coefficients in the regular representation.
\end{pro}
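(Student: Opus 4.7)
The plan is to unpack the $n=1$ instances of the two formal-deformation identities \eqref{cocycle-1} and \eqref{cocycle-2} and recognize them as the two coboundary equations $\dM^2_1\omega_1=0$ and $\dM^2_2\omega_1=0$ for the regular representation $(A,L,R)$, where $L_x y = \omega(x,y)$ and $R_x y = \omega(y,x)$.

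First I would write out \eqref{cocycle-1} for $n=1$. The only pairs $(i,j)$ with $i+j=1$ and $i,j\geq 0$ are $(0,1)$ and $(1,0)$; since $\omega_0=\omega$, this identity becomes
\begin{eqnarray*}
0 &=& \omega(x,\omega_1(y,z)) - \omega(y,\omega_1(x,z)) - \omega(\omega_1(y,x),z) + \omega(\omega_1(x,y),z) \\
&& {} + \omega_1(x,\omega(y,z)) - \omega_1(y,\omega(x,z)) - \omega_1(\omega(y,x),z) + \omega_1(\omega(x,y),z).
\end{eqnarray*}
The first four terms are exactly $L_x\omega_1(y,z) - L_y\omega_1(x,z) - R_z\omega_1(y,x) + R_z\omega_1(x,y)$, while the remaining four terms collect to $\omega_1(x,y\cdot z) - \omega_1(y,x\cdot z) + \omega_1([x,y],z)$. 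Comparing with the definition of $\dM^2_1$ with $(\rho,\mu)=(L,R)$, this is precisely $\dM^2_1\omega_1(x,y,z) = 0$.

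Next I would do the analogous expansion of \eqref{cocycle-2} at $n=1$, which again splits into an $i=0$ part (involving the bracket induced by $\omega_1$, paired through $\omega$) and an $i=1$ part (involving the bracket induced by $\omega$, paired through $\omega_1$). The $i=0$ terms produce $R_x(\omega_1(y,z)-\omega_1(z,y)) + R_y(\omega_1(z,x)-\omega_1(x,z)) + R_z(\omega_1(x,y)-\omega_1(y,x))$, and the $i=1$ terms give $\omega_1([x,y],z) + \omega_1([y,z],x) + \omega_1([z,x],y)$. Reading the definition of $\dM^2_2$ for the regular representation, this sum is exactly $\dM^2_2\omega_1(x,y,z)$, so the identity forces $\dM^2_2\omega_1 = 0$.

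Since both coboundary components vanish, $\omega_1\in Z^2(A;A)$, completing the proof. The argument is a bookkeeping verification rather than a genuine obstacle; the only thing to watch is the sign pattern on the $R$-terms in $\dM^2_1$ (which come from $\mu(z)f(y,x) = R_z\omega_1(y,x) = \omega(\omega_1(y,x),z)$) to make sure they match the cyclic signs produced by the deformation equation.
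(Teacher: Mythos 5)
Your proposal is correct and matches the paper's proof essentially verbatim: both expand the $n=1$ instances of \eqref{cocycle-1} and \eqref{cocycle-2} into the $(i,j)=(0,1)$ and $(1,0)$ contributions and identify the resulting sums with $\dM^2_1\omega_1(x,y,z)$ and $\dM^2_2\omega_1(x,y,z)$ for the regular representation $(A,L,R)$. The term-by-term identifications, including the sign bookkeeping on the $\mu(z)=R_z$ terms, are exactly as in the paper.
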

\begin{proof}
When $n=1$, for all $x,y,z\in A$, by \eqref{cocycle-1}, we have
\begin{eqnarray*}
0&=&x\cdot\omega_1(y,z)-y\cdot\omega_1(x,z)-\omega_1(y,x)\cdot z+\omega_1(x,y)\cdot z\\
&&+\omega_1(x,y\cdot z)-\omega_1(y,x\cdot z)-\omega_1(y\cdot x,z)+\omega_1(x\cdot y,z)\\
&=&\dM^2_1\omega_1(x,y,z),
\end{eqnarray*}
and by \eqref{cocycle-2}, we have
\begin{eqnarray*}
 0&=&(\omega_1(x,y)-\omega_1(y,x))\cdot z+(\omega_1(y,z)-\omega_1(z,y))\cdot x+(\omega_1(z,x)-\omega_1(x,z))\cdot y\\
&&+\omega_1([x,y],z)+\omega_1([y,z],x)+\omega_1([z,x],y)\\
&=&\dM^2_2\omega_1(x,y,z).
\end{eqnarray*}
Thus, $\omega_1$ is a $2$-cocycle of the anti-pre-Lie algebra $(A,\omega)$ with coefficients in the regular representation.
\end{proof}

\begin{defi}
The $2$-cocycle $\omega_1$ is called the {\bf infinitesimal} of the $1$-parameter formal deformation $(A[[t]],\omega_t)$ of the anti-pre-Lie algebra $(A,\omega)$.
\end{defi}

\begin{defi}
Let $\omega_t'=\omega+\sum_{i=1}^{+\infty} \omega_i't^i$ and $\omega_t=\omega+\sum_{i=1}^{+\infty} \omega_it^i$ be two $1$-parameter formal deformations of an anti-pre-Lie algebra $(A,\omega)$. A {\bf formal isomorphism} from $(A[[t]],\omega_t')$ to $(A[[t]],\omega_t)$ is a power series $\Phi_t=\sum_{i=0}^{+\infty} \varphi_it^i$, where $\varphi_i:A\longrightarrow A$ are linear maps with $\varphi_0={\Id}$, such that
\begin{equation*}
\Phi_t\circ \omega_t'=\omega_t \circ(\Phi_t\otimes \Phi_t).
\end{equation*}
Two $1$-parameter formal deformations $(A[[t]],\omega_t')$ and $(A[[t]],\omega_t)$ are said to be {\bf equivalent} if there exists a
formal isomorphism $\Phi_t=\sum_{i=0}^{+\infty} \varphi_it^i$ from $(A[[t]],\omega_t')$ to $(A[[t]],\omega_t)$.
\end{defi}

\begin{thm}
Let $(A,\omega)$ be an anti-pre-Lie algebra. If two $1$-parameter formal deformations $\omega_t'=\omega+\sum_{i=1}^{+\infty} \omega_i't^i$ and $\omega_t=\omega+\sum_{i=1}^{+\infty} \omega_it^i$ are equivalent, then the infinitesimals $\omega_1'$ and $\omega_1$ are in the same cohomology class of $H^2(A;A)$.
\end{thm}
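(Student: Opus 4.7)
The plan is to expand the equivalence relation $\Phi_t\circ\omega_t'=\omega_t\circ(\Phi_t\otimes\Phi_t)$ as a formal power series in $t$ and compare the coefficients of $t^1$ to show that $\omega_1'-\omega_1$ is precisely the coboundary $\dM^1(\varphi_1)$ with respect to the regular representation $(A,L,R)$.

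First, I would write out the left-hand side
\begin{eqnarray*}
\Phi_t\bigl(\omega_t'(x,y)\bigr)
&=&\Bigl(\sum_{i\geq 0}\varphi_it^i\Bigr)\Bigl(\omega(x,y)+\sum_{j\geq 1}\omega_j'(x,y)t^j\Bigr),
\end{eqnarray*}
whose $t^1$-coefficient (using $\varphi_0=\Id$) is $\omega_1'(x,y)+\varphi_1(x\cdot y)$. Next I would expand the right-hand side
\begin{eqnarray*}
\omega_t\bigl(\Phi_t(x),\Phi_t(y)\bigr)
&=&\Bigl(\omega+\sum_{i\geq 1}\omega_it^i\Bigr)\Bigl(x+\sum_{j\geq 1}\varphi_j(x)t^j,\ y+\sum_{k\geq 1}\varphi_k(y)t^k\Bigr),
\end{eqnarray*}
whose $t^1$-coefficient is $\omega_1(x,y)+\varphi_1(x)\cdot y+x\cdot\varphi_1(y)$.

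Equating the two expressions for the coefficient of $t^1$ yields
\begin{equation*}
\omega_1'(x,y)-\omega_1(x,y)=x\cdot\varphi_1(y)+\varphi_1(x)\cdot y-\varphi_1(x\cdot y).
\end{equation*}
Recalling the $1$-coboundary formula $\dM^1(f)(x,y)=\rho(x)f(y)+\mu(y)f(x)-f(x\cdot y)$, applied to the regular representation $(A,L,R)$ where $L(x)y=x\cdot y$ and $R(y)x=x\cdot y$, the right-hand side is exactly $\dM^1(\varphi_1)(x,y)$. Hence $\omega_1'-\omega_1=\dM^1(\varphi_1)\in B^2(A;A)$, proving that $[\omega_1']=[\omega_1]$ in $H^2(A;A)$.

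The argument is essentially a bookkeeping exercise; I do not anticipate any real obstacle. The only subtle point is to keep track of two conventions at once, namely that the regular representation pairs $\rho=L$ with $\mu=R$, so that the two terms $\varphi_1(x)\cdot y$ and $x\cdot\varphi_1(y)$ on the right-hand side correctly match $\mu(y)\varphi_1(x)$ and $\rho(x)\varphi_1(y)$ in the coboundary formula. Once this identification is made, the conclusion is immediate from the definition of cohomology classes.
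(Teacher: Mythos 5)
Your proof is correct and takes essentially the same approach as the paper: both compare the coefficients of $t^1$ in the equivalence relation to obtain $\omega_1'-\omega_1=\dM^1\varphi_1\in B^2(A;A)$. The only cosmetic difference is that you keep $\Phi_t$ on the left-hand side instead of expanding $\Phi_t^{-1}$, which is slightly cleaner bookkeeping but not a different argument.
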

\begin{proof}
Let $\omega_t'$ and $\omega_t$ be two $1$-parameter formal deformations. By Proposition \ref{pro:2-cocycle-regular-rep}, we have $\omega_1', \omega_1 \in Z^2(A;A)$. Let  $\Phi_t=\sum_{i=0}^{+\infty} \varphi_it^i$ be the formal isomorphism. Then for all $x,y\in A$, we have
\begin{eqnarray*}
\omega_t'(x,y)
&=&\Phi_t^{-1}\circ\omega_t(\Phi_t(x),\Phi_t(y))\\
&=&({\Id}-\varphi_1 t+\dots)\omega_t\big(x+\varphi_1(x)t+\dots,y+\varphi_1(y)t+\dots\big)\\
&=&({\Id}-\varphi_1 t+\dots)\Big(x\cdot y+\big(x\cdot\varphi_1(y)+\varphi_1(x)\cdot y+\omega_1(x,y)\big)t+\dots\Big)\\
&=&x\cdot y+\Big(x\cdot\varphi_1(y)+\varphi_1(x)\cdot y+\omega_1(x,y)-\varphi_1(x\cdot y)\Big)t+\dots.
\end{eqnarray*}
Thus, we have
\begin{eqnarray*}
\omega_1'(x,y)-\omega_1(x,y)&=&x\cdot\varphi_1(y)+\varphi_1(x)\cdot y-\varphi_1(x\cdot y)\\
&=&\dM^1\varphi_1(x,y),
\end{eqnarray*}
which implies that $\omega_1'-\omega_1=\dM^1\varphi_1.$

Thus, we have $\omega_1'-\omega_1\in B^2(A;A)$. This finishes the proof.
\end{proof}

\begin{defi}
A $1$-parameter formal deformation $(A[[t]],\omega_t)$ of an anti-pre-Lie algebra $(A,\omega)$ is said to be {\bf trivial} if it is equivalent to $(A,\omega)$, i.e. there exists $\Phi_t=\sum_{i=0}^{+\infty} \varphi_it^i$, where $\varphi_i:A\longrightarrow A$ are linear maps with $\varphi_0={\Id}$, such that
\begin{equation*}
 \Phi_t\circ \omega_t=\omega \circ(\Phi_t\otimes \Phi_t).
\end{equation*}
\end{defi}
\begin{defi}
Let $(A,\omega)$ be an anti-pre-Lie algebra. If all $1$-parameter formal deformations are trivial, then $(A,\omega)$ is called {\bf rigid}.
\end{defi}
\begin{thm}
Let $(A,\omega)$ be an anti-pre-Lie algebra. If $H^2(A;A)=0$, then $(A,\omega)$ is rigid.
\end{thm}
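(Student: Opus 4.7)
The plan is to run the standard rigidity argument: use vanishing of $H^2(A;A)$ to kill the lowest nonzero deformation term, iterate, and thereby produce a formal isomorphism to the trivial deformation.

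Let $\omega_t=\omega+\sum_{i\geq 1}\omega_i t^i$ be any $1$-parameter formal deformation. First I would prove by induction the following refined claim: if $\omega_t=\omega+\sum_{i\geq n}\omega_i t^i$ with leading order term at degree $n\geq 1$, then the leading term $\omega_n$ is a $2$-cocycle, i.e.\ $\dM^2_1\omega_n=0$ and $\dM^2_2\omega_n=0$. For $n=1$ this is Proposition \ref{pro:2-cocycle-regular-rep}. For general $n$, the identities \eqref{n-sum-cocycle-1} and \eqref{n-sum-cocycle-2} say that $\dM^2_1\omega_n$ and $\dM^2_2\omega_n$ equal (up to sign) sums over pairs $(i,j)$ with $i+j=n$ and $0<i,j<n$; under the hypothesis that the leading term sits in degree $n$, all such terms vanish, giving $\omega_n\in Z^2(A;A)$.

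Since $H^2(A;A)=0$, we obtain $\varphi_n:A\longrightarrow A$ with $\omega_n=\dM^1\varphi_n$. Set $\Phi_t=\Id+\varphi_n t^n$ and define the conjugated deformation
\begin{equation}
\nonumber \omega_t'(x,y)=\Phi_t^{-1}\bigl(\omega_t(\Phi_t(x),\Phi_t(y))\bigr).
\end{equation}
Expanding as in the proof of the equivalence theorem above, the coefficient of $t^n$ in $\omega_t'$ is
\begin{equation}
\nonumber \omega_n(x,y)+x\cdot\varphi_n(y)+\varphi_n(x)\cdot y-\varphi_n(x\cdot y)\;-\;\omega_n(x,y)\;-\;\dM^1\varphi_n(x,y)\;=\;-\dM^1\varphi_n(x,y)+\omega_n(x,y)-\omega_n(x,y)=0.
\end{equation}
(Here I would carry out the expansion carefully: terms of order $<n$ are unchanged because $\Phi_t\equiv\Id$ mod $t^n$, and in degree $n$ the contributions collect to $\omega_n-\dM^1\varphi_n=0$.) Thus $\omega_t'=\omega+\sum_{i\geq n+1}\omega_i' t^i$ is equivalent to $\omega_t$ but its lowest nonzero term has been pushed from degree $n$ to degree $\geq n+1$.

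Iterating, I construct a sequence $\Phi_t^{(n)}=\Id+\varphi_n t^n$ for $n=1,2,\dots$, each killing the current lowest order term. The composed transformation
\begin{equation}
\nonumber \Psi_t=\cdots\circ\Phi_t^{(3)}\circ\Phi_t^{(2)}\circ\Phi_t^{(1)}
\end{equation}
is a well-defined formal power series in $t$ (the coefficient of each $t^k$ stabilizes after finitely many factors, since $\Phi_t^{(n)}\equiv\Id\pmod{t^n}$) with constant term $\Id$, and by construction satisfies $\Psi_t\circ\omega=\omega_t\circ(\Psi_t\otimes\Psi_t)$. Hence $(A[[t]],\omega_t)$ is equivalent to $(A,\omega)$, proving rigidity.

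The main technical obstacle is the inductive step showing $\omega_n\in Z^2(A;A)$ when the deformation begins at degree $n$; this is where the identities \eqref{n-sum-cocycle-1} and \eqref{n-sum-cocycle-2} are essential, and where one must be careful that both components $\dM^2_1$ and $\dM^2_2$ of the cocycle condition vanish simultaneously. The remainder is the standard convergence-in-the-$t$-adic-topology argument for the infinite composition $\Psi_t$.
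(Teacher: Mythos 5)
Your proposal follows the same route as the paper: isolate the lowest nonzero term $\omega_n$, observe via \eqref{n-sum-cocycle-1} and \eqref{n-sum-cocycle-2} that it is a $2$-cocycle because the quadratic sums over $0<i,j<n$ vanish, use $H^2(A;A)=0$ to write it as a coboundary, kill it by conjugating with $\Id+\varphi_n t^n$, and iterate; your explicit remarks on why the induction restarts at each stage and why the infinite composition $\Psi_t$ converges $t$-adically are welcome additions that the paper compresses into ``keep repeating the process.''

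There is, however, a sign slip in the cancellation step. With $\Phi_t=\Id+\varphi_n t^n$, the coefficient of $t^n$ in $\omega_t'(x,y)=\Phi_t^{-1}\bigl(\omega_t(\Phi_t(x),\Phi_t(y))\bigr)$ is exactly
\begin{equation}
\nonumber \omega_n(x,y)+x\cdot\varphi_n(y)+\varphi_n(x)\cdot y-\varphi_n(x\cdot y)=\omega_n(x,y)+\dM^1\varphi_n(x,y),
\end{equation}
so your choice $\omega_n=\dM^1\varphi_n$ yields $2\,\dM^1\varphi_n=2\omega_n$, not $0$. The displayed computation in your proposal subtracts $\omega_n(x,y)$ and $\dM^1\varphi_n(x,y)$ a second time, which is not produced by the expansion and makes the expression identically zero regardless of how $\varphi_n$ is chosen --- a sign the bookkeeping went astray. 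The fix is one character: choose $\varphi_n$ with $\dM^1\varphi_n=-\omega_n$ (the paper writes $\omega_n=\dM^1(-\varphi_n)$), or equivalently conjugate by $\Id-\varphi_n t^n$. With that correction the argument is complete and agrees with the paper's.
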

\begin{proof}
Let $\omega_t=\omega+\sum_{i=1}^{+\infty} \omega_it^i$ be a $1$-parameter formal deformation and assume that $n\geq1$ is the minimal number such that $\omega_n$ is not zero. By \eqref{n-sum-cocycle-1}, \eqref{n-sum-cocycle-2} and $H^2(A;A)=0$, we have $\omega_n\in B^2(A;A)$. Thus, there exists $\varphi_n \in C^1(A;A)$ such that $\omega_n=\dM^1(-\varphi_n)$. Let $\Phi_t={\Id}+\varphi_nt^n$ and define a new formal deformation $\omega_t'$ by $\omega_t'(x,y)=\Phi_t^{-1}\circ\omega_t(\Phi_t(x),\Phi_t(y))$. Then $\omega_t'$ and $\omega_t$ are equivalent. By straightforward computation, for all $x,y\in A$, we have
\begin{eqnarray*}
\omega_t'(x,y)
&=&\Phi_t^{-1}\circ\omega_t(\Phi_t(x),\Phi_t(y))\\
&=&({\Id}-\varphi_n t^n+\dots)\omega_t\big(x+\varphi_n(x)t^n,y+\varphi_n(y)t^n\big)\\
&=&({\Id}-\varphi_n t^n+\dots)\Big(x\cdot y+\big(x\cdot\varphi_n(y)+\varphi_n(x)\cdot y+\omega_n(x,y)\big)t^n+\dots\Big)\\
&=&x\cdot y+\Big(x\cdot\varphi_n(y)+\varphi_n(x)\cdot y+\omega_n(x,y)-\varphi_n(x\cdot y)\Big)t^n+\dots.
\end{eqnarray*}
Thus, we have $\omega_1'=\omega_2'=\dots=\omega_{n-1}'=0$. Moreover, we have
\begin{eqnarray*}
\omega_n'(x,y)&=&x\cdot \varphi_n(y)+\varphi_n(x)\cdot y+\omega_n(x,y)-\varphi_n(x\cdot y)\\
&=&\dM^1\varphi_n(x,y)+\omega_n(x,y)\\
&=&0.
\end{eqnarray*}
Keep repeating the process, we obtain that $(A[[t]],\omega_t)$ is equivalent to $(A,\omega)$. The proof is finished.
\end{proof}

\section{Abelian extensions of anti-pre-Lie algebras}\label{sec:ext}
In this section, we study abelian extensions of anti-pre-Lie algebras using the cohomological approach. We show that abelian extensions are classified by the second cohomology group $H^2(A;V)$.

\begin{defi}
Let $(A,\cdot)$ and $(V,\cdot_V)$ be two anti-pre-Lie algebras. An {\bf  extension} of $(A,\cdot)$ by $(V,\cdot_V)$ is a short exact sequence of anti-pre-Lie algebra:
$$\xymatrix{
  0 \ar[r] &V \ar[r]^{\iota}& \hat{A}\ar[r]^{p}&A\ar[r]&0,              }$$
where $(\hat{A},\cdot_{\hat{A}})$ is an anti-pre-Lie algebra.

It is called an {\bf abelian extension} if  $(V,\cdot_V)$ is an abelian anti-pre-Lie algebra, i.e. for all $u,v\in V, u\cdot_V v=0$.
\end{defi}
\begin{defi}
A {\bf section} of an extension $(\hat{A},\cdot_{\hat{A}})$ of an anti-pre-Lie algebra $(A,\cdot)$ by  $(V,\cdot_V)$ is a linear map $s:A\longrightarrow \hat{A}$ such that $p\circ s=\Id_A$.
\end{defi}
Let $(\hat{A},\cdot_{\hat{A}})$ be an abelian extension of an anti-pre-Lie algebra $(A,\cdot)$ by $V$ and $s:A\longrightarrow \hat{A}$ a section. For all $x,y\in A$, define linear maps $\theta:A\otimes A\longrightarrow V$  by
\begin{equation*}
\theta(x,y)=s(x)\cdot_{\hat{A}}s(y)-s(x\cdot y).
\end{equation*}
And for all $x,y\in A, u\in V$, define $\rho,\mu:A\longrightarrow\gl(V)$ respectively by
\begin{eqnarray*}
\rho(x)(u)&=&s(x)\cdot_{\hat{A}} u,\\
\mu(x)(u)&=&u\cdot_{\hat{A}} s(x).
\end{eqnarray*}
Obviously, $\hat{A}$ is isomorphic to $A\oplus V$ as vector spaces. Transfer the anti-pre-Lie algebra structure on $\hat{A}$ to that on $A\oplus V$, we obtain an anti-pre-Lie algebra $(A\oplus V,\diamond)$, where $\diamond$ is given by
\begin{equation}
  \label{eq:6.1}(x+u)\diamond (y+v)=x\cdot y+\theta(x,y)+\rho(x)(v)+\mu(y)(u),\quad \forall ~x,y\in A, u,v\in V.
\end{equation}

\begin{lem}\label{lem:representation}
With the above notations, $(V,\rho,\mu)$ is a representation of the anti-pre-Lie algebra $(A,\cdot)$.
\end{lem}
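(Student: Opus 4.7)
The plan is to verify the three representation axioms (\ref{anti-pre-lie-rep-1})--(\ref{anti-pre-lie-rep-3}) directly by applying the anti-pre-Lie identities (\ref{anti-pre-lie-1}) and (\ref{anti-pre-lie-2}) of $(\hat{A},\cdot_{\hat{A}})$ to triples in which two slots are filled by lifts $s(x), s(y)$ and the remaining slot by an arbitrary $u\in V$. Before starting I would record three bookkeeping facts that make every cancellation transparent: (i) since the sequence is exact, $V=\ker p$, and because $p$ is a morphism this forces $s(x)\cdot_{\hat{A}} u,\, u\cdot_{\hat{A}} s(x)\in V$, so $\rho(x),\mu(x)$ really do land in $\gl(V)$; (ii) $\theta(x,y)=s(x)\cdot_{\hat{A}} s(y)-s(x\cdot y)$ lies in $V$; and (iii) $V$ being abelian forces $\theta(x,y)\cdot_{\hat{A}}u = u\cdot_{\hat{A}}\theta(x,y)=0$, since both factors lie in $V$.

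With these observations I would run through the three identities in order. For (\ref{anti-pre-lie-rep-1}), feed $s(x), s(y), u$ into (\ref{anti-pre-lie-1}): the left-hand side is immediately $\rho(x)\rho(y)(u)-\rho(y)\rho(x)(u)$, while the right-hand side expands as $[s(y),s(x)]_{\hat{A}}\cdot_{\hat{A}}u = s([y,x])\cdot_{\hat{A}}u + (\theta(y,x)-\theta(x,y))\cdot_{\hat{A}}u$, where the second summand vanishes by (iii) and the first equals $\rho[y,x](u)$. For (\ref{anti-pre-lie-rep-2}), I would feed $u, s(x), s(y)$ into (\ref{anti-pre-lie-1}); after replacing $s(x)\cdot_{\hat{A}} s(y)$ by $s(x\cdot y)+\theta(x,y)$ and killing the $u\cdot_{\hat{A}}\theta(x,y)$ term, what remains rearranges into $\mu(x\cdot y)-\rho(x)\mu(y)=\mu(y)\rho(x)-\mu(y)\mu(x)$. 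For (\ref{anti-pre-lie-rep-3}), I would plug $s(x), s(y), u$ into the cyclic identity (\ref{anti-pre-lie-2}); the three cyclic summands evaluate to $\rho[x,y](u)$, $\mu(x)\rho(y)(u)-\mu(x)\mu(y)(u)$, and $\mu(y)\mu(x)(u)-\mu(y)\rho(x)(u)$ respectively, and the fact that they sum to zero is precisely (\ref{anti-pre-lie-rep-3}).

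There is no genuine obstacle beyond careful bookkeeping: every cancellation ultimately comes from $V\cdot_{\hat{A}}V=0$, which swallows every $\theta$-contribution that would otherwise obstruct the identities. A pleasant side observation, which I would flag for later use in the classification, is that $\rho$ and $\mu$ are independent of the chosen section $s$, since any change of section shifts the relevant elements only by things in $V$ which are then annihilated; all of the dependence on $s$ is absorbed into the cochain $\theta$, foreshadowing that abelian extensions will be governed by the cohomology class of $\theta$ in $H^2(A;V)$.
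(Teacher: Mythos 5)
Your proof is correct and follows essentially the same route as the paper: both verify \eqref{anti-pre-lie-rep-1}--\eqref{anti-pre-lie-rep-3} by substituting the triples $(s(x),s(y),u)$, $(u,s(x),s(y))$, and $(s(x),s(y),u)$ into \eqref{anti-pre-lie-1} and \eqref{anti-pre-lie-2}, with all $\theta$-terms killed by $V\cdot_{\hat{A}}V=0$; the only cosmetic difference is that the paper works in the transferred structure $(A\oplus V,\diamond)$ rather than directly in $\hat{A}$. Your closing observation that $\rho$ and $\mu$ are independent of the section is also correct and appears in the paper as a separate proposition immediately afterwards.
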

\begin{proof}
For all $x,y\in A$, $u\in V$, by \eqref{anti-pre-lie-1}, we have
\begin{eqnarray*}
0&=&x\diamond(y\diamond u)-y\diamond(x\diamond u)-(y\diamond x)\diamond u+(x\diamond y)\diamond u\\
 &=&x\diamond\rho(y)(u)-y\diamond\rho(x)(u)-(y\cdot x+\theta(y,x))\diamond u+(x\cdot y+\theta(x,y))\diamond u\\
&=&\rho(x)\rho(y)(u)-\rho(y)\rho(x)(u)-\rho([y,x])(u),
\end{eqnarray*}
and
\begin{eqnarray*}
 0&=&u\diamond(x\diamond y)-x\diamond(u\diamond y)-(x\diamond u)\diamond y+(u\diamond x)\diamond y\\
&=&u\diamond(x\cdot y+\theta(x,y))-x\diamond \mu(y)(u)-\rho(x)(u)\diamond y+\mu(x)(u)\diamond y\\
&=&\mu(x \cdot y)(u)-\rho(x)\mu(y)(u)-\mu(y)\rho(x)(u)+\mu(y)\mu(x)(u),
\end{eqnarray*}
which implies that
\begin{eqnarray*}
\rho(x)\circ\rho(y)-\rho(y)\circ\rho(x)&=&\rho([y,x]),\\
\mu(x\cdot y)-\rho(x)\circ\mu(y)&=&\mu(y)\circ\rho(x)-\mu(y)\circ\mu(x).
\end{eqnarray*}
For all $x,y\in A$, $u\in V$, by \eqref{anti-pre-lie-2}, we have
\begin{eqnarray*}
 0&=&(x\diamond y-y\diamond x)\diamond u+(y\diamond u-u\diamond y)\diamond x+(u\diamond x-x\diamond u)\diamond y\\
&=&(x\cdot y+\theta(x,y)-y\cdot x-\theta(y,x))\diamond u+(\rho(y)(u)-\mu(y)(u))\diamond x+(\mu(x)(u)-\rho(x)(u))\diamond y\\
&=&\rho([x,y])(u)+\mu(x)\rho(y)(u)-\mu(x)\mu(y)(u)+\mu(y)\mu(x)(u)-\mu(y)\rho(x)(u),
\end{eqnarray*}
which implies that
\begin{equation*}
\mu(y)\circ\mu(x)-\mu(x)\circ\mu(y)+\rho[x,y]=\mu(y)\circ\rho(x)-\mu(x)\circ\rho(y).
\end{equation*}
Thus, $(V,\rho,\mu)$ is a representation of the anti-pre-Lie algebra $(A,\cdot)$.
\end{proof}

\begin{thm}\label{thm:cocycle}
Let $(\hat{A},\cdot_{\hat{A}})$ be an abelian extension of an anti-pre-Lie algebra $(A,\cdot)$ by $V$. Then $\theta$ is a $2$-cocycle of $(A,\cdot)$ with coefficients in the representation $(V,\rho,\mu)$.
\end{thm}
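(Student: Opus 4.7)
The plan is to transport the anti-pre-Lie structure from $\hat{A}$ to $A\oplus V$ via the vector-space isomorphism $\iota\oplus s\colon A\oplus V\to \hat{A}$ and then extract the two cocycle conditions by projecting the defining identities \eqref{anti-pre-lie-1}--\eqref{anti-pre-lie-2} onto $V$. By construction \eqref{eq:6.1} is exactly this transported product, so $(A\oplus V,\diamond)$ is automatically an anti-pre-Lie algebra and the whole verification reduces to expanding and collecting terms.

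As a preliminary step I would record the bracket of two elements of $A$ inside $(A\oplus V,\diamond)$,
\[
[x,y]_{A\oplus V}=x\diamond y-y\diamond x=[x,y]+\bigl(\theta(x,y)-\theta(y,x)\bigr),
\]
with the second summand lying in $V$; this is the source of all the $\mu(\cdot)\theta$-corrections that appear below.

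Next I would apply \eqref{anti-pre-lie-1} in $(A\oplus V,\diamond)$ to three elements $x,y,z\in A$. Expanding every product via \eqref{eq:6.1}, the $A$-component just reproduces \eqref{anti-pre-lie-1} in $(A,\cdot)$ and vanishes. The $V$-component, obtained by noting that $[y,x]_{A\oplus V}\diamond z$ contributes both $\theta([y,x],z)$ and $\mu(z)\bigl(\theta(y,x)-\theta(x,y)\bigr)$, reads
\[
\rho(x)\theta(y,z)-\rho(y)\theta(x,z)-\mu(z)\theta(y,x)+\mu(z)\theta(x,y)-\theta(y,x\cdot z)+\theta(x,y\cdot z)-\theta([y,x],z)=0,
\]
and after rewriting $-\theta([y,x],z)=\theta([x,y],z)$ by bilinearity this is exactly $\dM^2_1\theta(x,y,z)=0$.

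Finally I would apply \eqref{anti-pre-lie-2} to the same triple. Each term $[u,v]_{A\oplus V}\diamond w$ contributes $\theta([u,v],w)$ plus the correction $\mu(w)\bigl(\theta(u,v)-\theta(v,u)\bigr)$; summing cyclically over $(x,y,z)$ and projecting to $V$ kills the $A$-part by \eqref{anti-pre-lie-2} for $(A,\cdot)$ and leaves precisely $\dM^2_2\theta(x,y,z)=0$. The only obstacle worth flagging is bookkeeping: since an element of $V$ multiplied into $\hat{A}$ from the left produces $\rho$ while from the right produces $\mu$, one has to be careful that the $\theta$-correction hidden inside a bracket $[\,\cdot\,,\cdot\,]_{A\oplus V}$ always meets the third argument on the right, yielding the $\mu$'s demanded by the definitions of $\dM^2_1$ and $\dM^2_2$. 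No further algebraic input beyond \eqref{anti-pre-lie-1}--\eqref{anti-pre-lie-2} and the formula \eqref{eq:6.1} is needed.
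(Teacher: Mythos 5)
Your proposal is correct and follows essentially the same route as the paper: transfer the product to $(A\oplus V,\diamond)$ via \eqref{eq:6.1}, apply \eqref{anti-pre-lie-1} and \eqref{anti-pre-lie-2} to $x,y,z\in A$, and read off the $V$-component, which is exactly $\dM^2_1\theta=0$ and $\dM^2_2\theta=0$. Your bookkeeping of the correction term $\theta(x,y)-\theta(y,x)$ inside $[x,y]_{A\oplus V}$ and of the resulting $\mu$-terms matches the paper's expansion.
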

\begin{proof}
For all $x,y,z\in A$, by \eqref{anti-pre-lie-1}, we have
\begin{eqnarray*}
 0&=&x\diamond(y\diamond z)-y\diamond(x\diamond z)-(y\diamond x)\diamond z+(x\diamond y)\diamond z\\
&=&x\diamond(y\cdot z+\theta(y,z))-y\diamond(x\cdot z+\theta(x,z))-(y\cdot x+\theta(y,x))\diamond z+(x\cdot y+\theta(x,y))\diamond z\\
&=&\theta(x,y\cdot z)+\rho(x)\theta(y,z)-\theta(y,x\cdot z)-\rho(y)\theta(x,z)\\
&&-\theta(y\cdot x,z)-\mu(z)\theta(y,x)+\theta(x\cdot y,z)+\mu(z)\theta(x,y)\\
&=&\dM^2_1\theta(x,y,z).
\end{eqnarray*}
By \eqref{anti-pre-lie-2}, we have
\begin{eqnarray*}
0&=&(x\diamond y-y\diamond x)\diamond z+(y\diamond z-z\diamond y)\diamond x+(z\diamond x-x\diamond z)\diamond y\\
&=&(x\cdot y+\theta(x,y)-y\cdot x-\theta(y,x))\diamond z+(y\cdot z+\theta(y,z)-z\cdot y-\theta(z,y))\diamond x\\
&&+(z\cdot x+\theta(z,x)-x\cdot z-\theta(x,z))\diamond y\\
&=&\theta(x\cdot y,z)+\mu(z)\theta(x,y)-\theta(y\cdot x,z)-\mu(z)\theta(y,x)+\theta(y\cdot z,x)+\mu(x)\theta(y,z)-\theta(z\cdot y,x)\\
&&-\mu(x)\theta(z,y)+\theta(z\cdot x,y)+\mu(y)\theta(z,x)-\theta(x\cdot z,y)-\mu(y)\theta(x,z)\\
&=&\dM^2_2\theta(x,y,z).
\end{eqnarray*}
Thus, $\theta$ is a $2$-cocycle of the anti-pre-Lie algebra $(A,\cdot)$ with coefficients in the representation $(V,\rho,\mu)$. The proof is finished.
\end{proof}
\begin{pro}
Let $(\hat{A},\cdot_{\hat{A}})$ be an abelian extension of an anti-pre-Lie algebra $(A,\cdot)$ by $V$. Then two different sections give rise to the same representation of $(A,\cdot)$.
\end{pro}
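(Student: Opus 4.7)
The plan is direct and short. Let $s_1,s_2:A\longrightarrow\hat{A}$ be two sections of the abelian extension. Since $p\circ s_1=p\circ s_2=\Id_A$, we have $p(s_1(x)-s_2(x))=0$ for every $x\in A$, so $s_1(x)-s_2(x)\in\ker p\cong V$. First I would package this information by defining a linear map $\varphi:A\longrightarrow V$ via
\begin{equation}
\nonumber s_1(x)=s_2(x)+\varphi(x),\qquad \forall x\in A,
\end{equation}
which is the only structural input we need.

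Next, for each section $s_i$ let $(\rho_i,\mu_i)$ denote the corresponding representation of $(A,\cdot)$ constructed as in Lemma \ref{lem:representation}. I would simply substitute the above identity into the defining formulas and use bilinearity of $\cdot_{\hat{A}}$: for all $x\in A$ and $u\in V$,
\begin{eqnarray*}
\rho_1(x)(u)&=&s_1(x)\cdot_{\hat{A}} u\;=\;s_2(x)\cdot_{\hat{A}} u+\varphi(x)\cdot_{\hat{A}} u,\\
\mu_1(x)(u)&=&u\cdot_{\hat{A}} s_1(x)\;=\;u\cdot_{\hat{A}} s_2(x)+u\cdot_{\hat{A}}\varphi(x).
\end{eqnarray*}

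The whole proof hinges on one observation, namely the abelianness hypothesis: since $\varphi(x)$ and $u$ both lie in $V$ and $(V,\cdot_V)$ is an abelian anti-pre-Lie algebra, the products $\varphi(x)\cdot_{\hat{A}} u$ and $u\cdot_{\hat{A}}\varphi(x)$ both vanish. This immediately yields $\rho_1(x)(u)=\rho_2(x)(u)$ and $\mu_1(x)(u)=\mu_2(x)(u)$, which is the claim. There is no real obstacle here; the only thing to be careful about is to invoke the abelianness of $V$ explicitly, since without it the two representations would differ by terms involving $\varphi$.
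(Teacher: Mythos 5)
Your proposal is correct and follows essentially the same route as the paper's proof: define $\varphi(x)=s_1(x)-s_2(x)\in\Ker p\cong V$, expand $\rho_1(x)(u)-\rho_2(x)(u)=\varphi(x)\cdot_{\hat{A}}u$ by bilinearity, and kill this term because products of elements of $V$ vanish in $\hat{A}$ (the abelianness of $V$ transported through $\iota$); the same works for $\mu$. No gaps.
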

\begin{proof}
Choosing two different sections $s_1,s_2:A\longrightarrow \hat{A}$, by Lemma \ref{lem:representation}, we obtain two representations $(V,\rho_1,\mu_1)$ and $(V,\rho_2,\mu_2)$. Define $\varphi:A\longrightarrow V$ by $\varphi(x)=s_1(x)-s_2(x)$. Then for all $x\in A$, we have
\begin{eqnarray*}
 \rho_1(x)(u)-\rho_2(x)(u)&=&s_1(x)\cdot_{\hat{A}} u-s_2(x)\cdot_{\hat{A}} u\\
  &=&(\varphi(x)+s_2(x))\cdot_{\hat{A}} u-s_2(x)\cdot_{\hat{A}} u\\
  &=&\varphi(x)\cdot_{\hat{A}} u\\
  &=&0,
\end{eqnarray*}
which implies that $\rho_1=\rho_2$.
Similarly, we have $\mu_1=\mu_2$. This finishes the proof.
\end{proof}
\begin{defi}\label{defi:morphism}
Let $(\hat{A_1},\cdot_{\hat{A_1}})$ and $(\hat{A_2},\cdot_{\hat{A_2}})$ be two abelian extensions of an anti-pre-Lie algebra $(A,\cdot)$ by  $V$. They are said to be {\bf isomorphic} if there exists an anti-pre-Lie algebra isomorphism $\zeta:(\hat{A_1},\cdot_{\hat{A_1}})\longrightarrow (\hat{A_2},\cdot_{\hat{A_2}})$ such that the following diagram is commutative:
$$\xymatrix{
  0 \ar[r] &V\ar @{=}[d]\ar[r]^{\iota_1}& \hat{A_1}\ar[d]_{\zeta}\ar[r]^{p_1}&A\ar @{=}[d]\ar[r]&0\\
     0\ar[r] &V\ar[r]^{ \iota_2} &\hat{A_2}\ar[r]^{p_2} &A\ar[r]&0.              }$$
\end{defi}
\begin{lem}
Let $(\hat{A_1},\cdot_{\hat{A_1}})$ and $(\hat{A_2},\cdot_{\hat{A_2}})$ be two isomorphic abelian extensions of an anti-pre-Lie algebra $(A,\cdot)$ by $V$. Then they give rise to the same representation of $(A,\cdot)$
\end{lem}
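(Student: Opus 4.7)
My plan is to use the isomorphism $\zeta$ to transport a chosen section from one extension to the other, and then invoke the section-independence result proved just above.

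First I would choose an arbitrary section $s_1 \colon A \longrightarrow \hat{A_1}$ of $p_1$ and set $s_2 \defbe \zeta \circ s_1 \colon A \longrightarrow \hat{A_2}$. The commutativity of the right-hand square in Definition \ref{defi:morphism} gives $p_2 \circ \zeta = p_1$, so $p_2 \circ s_2 = p_1 \circ s_1 = \Id_A$, meaning $s_2$ is a section of $p_2$. Denote the associated representations by $(V,\rho_1,\mu_1)$ and $(V,\rho_2,\mu_2)$ respectively, as in Lemma \ref{lem:representation}. By the preceding proposition, these representations are independent of the choice of section, so it suffices to compare them using the sections $s_1$ and $s_2$ just specified.

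The key ingredient is that $\zeta$ restricts to the identity on $V$. This follows from the left-hand square of the diagram: commutativity of $\zeta \circ \iota_1 = \iota_2$ means that once we identify $V$ with its images under $\iota_1$ and $\iota_2$, the map $\zeta$ acts as $\Id_V$ on this copy of $V$ sitting inside both $\hat{A_1}$ and $\hat{A_2}$. Combined with the fact that $\zeta$ is a homomorphism of anti-pre-Lie algebras, this gives, for all $x\in A$ and $u\in V$,
\begin{equation*}
\rho_2(x)(u) = s_2(x)\cdot_{\hat{A_2}} u = \zeta(s_1(x))\cdot_{\hat{A_2}} \zeta(u) = \zeta\bigl(s_1(x)\cdot_{\hat{A_1}} u\bigr) = \zeta(\rho_1(x)(u)) = \rho_1(x)(u),
\end{equation*}
where the last equality uses $\rho_1(x)(u)\in V$ and $\zeta|_V = \Id_V$. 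An identical computation with the factors reversed yields $\mu_2(x)(u) = \mu_1(x)(u)$.

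There is no serious obstacle here: the only subtle point is the conventional identification of $V$ with its image inside each $\hat{A_i}$, which must be used to read off $\zeta|_V = \Id_V$ from the commutative diagram. Once that identification is in place, the computation is a direct consequence of $\zeta$ being an anti-pre-Lie algebra isomorphism that covers the identity on $A$ and on $V$. Together with the section-independence proposition, this finishes the proof.
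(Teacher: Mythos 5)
Your proof is correct and takes essentially the same approach as the paper: both transport a section across the isomorphism (you push forward via $\zeta$, the paper pulls back via $\zeta^{-1}$), use $\zeta|_V=\Id_V$ from the left square and the homomorphism property to identify the two induced actions, and invoke section-independence to conclude. The difference in direction is purely cosmetic.
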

\begin{proof}
 Let $s_1:A_1\longrightarrow \hat{A_1}$ and $s_2:A_2\longrightarrow \hat{A_2}$ be two sections of $(\hat{A_1},\cdot_{\hat{A_1}})$ and $(\hat{A_2},\cdot_{\hat{A_2}})$ respectively. By Lemma \ref{lem:representation}, we obtain that $(V,\rho_1,\mu_1)$ and $(V,\rho_2,\mu_2)$ are their representations respectively. Define $s'_1:A_1\longrightarrow \hat{A_1}$ by $s'_1=\zeta^{-1}\circ s_2$. Since $\zeta:(\hat{A_1},\cdot_{\hat{A_1}})\longrightarrow (\hat{A_2},\cdot_{\hat{A_2}})$ is an anti-pre-Lie algebra isomorphism satisfying the commutative diagram in Definition \ref{defi:morphism}, by $p_2\circ \zeta=p_1$, we have
\begin{equation*}
p_1\circ s'_1=p_2\circ \zeta \circ \zeta^{-1}\circ s_2=\Id_A.
\end{equation*}
Thus, we obtain that $s'_1$ is a section of $(\hat{A_1},\cdot_{\hat{A_1}})$. For all $x\in A, u\in V$, we have
\begin{equation*}
\rho_1(x)(u)=s'_1(x) \cdot_{\hat{A_1}}u=(\zeta^{-1}\circ s_2)(x)\cdot_{\hat{A_1}} u=\zeta^{-1}(s_2(x) \cdot_{\hat{A_2}} u)=\rho_2(x)(u),
\end{equation*}
which implies that $\rho_1=\rho_2$.
Similarly, we have $\mu_1=\mu_2$. This finishes the proof.
\end{proof}
In the sequel, we fix a representation $(V,\rho,\mu)$ of an anti-pre-Lie algebra $(A,\cdot)$ and consider abelian extensions that induce the given representation.
\begin{thm}
Abelian extensions of an anti-pre-Lie algebra $(A,\cdot)$ by $V$ are classified by $H^2(A;V)$.
\end{thm}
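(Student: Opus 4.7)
The plan is to establish a bijection between the set of isomorphism classes of abelian extensions of $(A,\cdot)$ by $V$ (which induce the fixed representation $(V,\rho,\mu)$) and the cohomology group $H^2(A;V)$. First, I would construct the forward map. Given an abelian extension $(\hat{A},\cdot_{\hat{A}})$, choose any section $s:A\longrightarrow \hat{A}$ and form the associated $2$-cochain $\theta(x,y)=s(x)\cdot_{\hat{A}} s(y)-s(x\cdot y)$, which lies in $Z^2(A;V)$ by Theorem \ref{thm:cocycle}. To see that the class $[\theta]\in H^2(A;V)$ is independent of the choice of section, I would take a second section $s'$, note that $\varphi:=s'-s$ lands in $V$ since $p\circ\varphi=0$, and compute directly that $\theta'(x,y)-\theta(x,y)=\rho(x)\varphi(y)+\mu(y)\varphi(x)-\varphi(x\cdot y)=\dM^1\varphi(x,y)$, so $\theta'-\theta\in B^2(A;V)$.

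Next, I would show that isomorphic extensions give the same cohomology class. If $\zeta:(\hat A_1,\cdot_{\hat A_1})\longrightarrow(\hat A_2,\cdot_{\hat A_2})$ is an isomorphism compatible with the two short exact sequences, then given any section $s_2$ for the second extension, $\zeta^{-1}\circ s_2$ is a section for the first, as already verified in the paper. Because $\zeta$ restricts to the identity on $V$ and is an anti-pre-Lie algebra homomorphism, the two associated $2$-cocycles coincide, hence a fortiori represent the same class. Combined with the previous paragraph, this yields a well-defined map from isomorphism classes of abelian extensions to $H^2(A;V)$.

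For the inverse direction, given $\theta\in Z^2(A;V)$, I would endow $\hat A_\theta:=A\oplus V$ with the bilinear operation
\begin{equation*}
(x+u)\diamond(y+v)=x\cdot y+\theta(x,y)+\rho(x)(v)+\mu(y)(u),
\end{equation*}
together with the obvious inclusion $\iota\colon V\hookrightarrow\hat A_\theta$ and projection $p\colon\hat A_\theta\twoheadrightarrow A$. Verifying that $(\hat A_\theta,\diamond)$ is an anti-pre-Lie algebra amounts to checking \eqref{anti-pre-lie-1} and \eqref{anti-pre-lie-2}. Each identity splits into a pure $A$-part (which holds because $(A,\cdot)$ is anti-pre-Lie), mixed parts involving one element of $V$ (which hold because $(V,\rho,\mu)$ is a representation, using \eqref{anti-pre-lie-rep-1}--\eqref{anti-pre-lie-rep-3}), and a purely $V$-valued part whose vanishing is exactly $\dM^2_1\theta=0$ in the first case and $\dM^2_2\theta=0$ in the second. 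Taking $s:x\mapsto x+0$ as a section recovers $\theta$, so this construction is a one-sided inverse to the forward map.

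Finally, I would show the map is injective on cohomology classes: if $\theta'=\theta+\dM^1\varphi$ for some $\varphi\in C^1(A;V)$, define $\zeta:\hat A_{\theta'}\longrightarrow\hat A_\theta$ by $\zeta(x+u)=x+\varphi(x)+u$. A short calculation, in which the correction terms $\varphi(x\cdot y)$, $\rho(x)\varphi(y)$, $\mu(y)\varphi(x)$ produced by $\zeta$ are exactly cancelled by $\dM^1\varphi(x,y)=\theta'(x,y)-\theta(x,y)$, shows that $\zeta$ is an anti-pre-Lie algebra isomorphism fitting in the commutative diagram of Definition \ref{defi:morphism}. Conversely, any isomorphism of extensions produces, via the two induced sections, a difference map $\varphi$ whose coboundary is precisely $\theta'-\theta$. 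The main obstacle is the bookkeeping in the anti-pre-Lie axiom check for $(\hat A_\theta,\diamond)$, where one must carefully separate the contributions of $\theta$, of the representation $(\rho,\mu)$, and of the bracket $[\cdot,\cdot]$ to see the cocycle conditions emerge; everything else is a straightforward verification.
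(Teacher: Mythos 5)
Your proposal is correct and follows essentially the same route as the paper: extract a $2$-cocycle from a section, check independence of the section and invariance under isomorphism, and conversely build $(A\oplus V,\diamond_\theta)$ from a cocycle with cohomologous cocycles yielding isomorphic extensions via $\zeta(x+u)=x+u+\varphi(x)$. The one point where you are more explicit than the paper is in noting that one must verify the constructed $(A\oplus V,\diamond_\theta)$ actually satisfies the anti-pre-Lie axioms (with the purely $V$-valued components reducing to $\dM^2_1\theta=0$ and $\dM^2_2\theta=0$), a check the paper leaves implicit.
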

\begin{proof}
Let $(\hat{A},\cdot_{\hat{A}})$ be an abelian extension of an anti-pre-Lie algebra $(A,\cdot)$ by  $V$. Choosing a section $s:A\longrightarrow \hat{A}$, by Theorem \ref{thm:cocycle}, we obtain that $\theta\in Z^2(A;V)$. Now we show that the cohomological class of $\theta$ does not depend on the choice of sections. In fact, let $s_1$ and $s_2$ be two different sections. Define $\varphi:A\longrightarrow V$ by $\varphi(x)=s_1(x)-s_2(x)$. Then for all $x,y\in A$, we have
\begin{eqnarray*}
 \theta_1(x,y)&=&s_1(x)\cdot_{\hat{A}} s_1(y)-s_1(x\cdot y)\\
  &=&\big(s_2(x)+\varphi(x)\big)\cdot_{\hat{A}} \big(s_2(y)+\varphi(y)\big)-s_2(x\cdot y)-\varphi(x\cdot y)\\
  &=&s_2(x)\cdot_{\hat{A}} s_2(y)+\rho(x)\varphi(y)+\mu(y)\varphi(x)-s_2(x\cdot y)-\varphi(x\cdot y)\\
  &=&\theta_2(x,y)+\dM^1\varphi(x,y),
\end{eqnarray*}
which implies that $\theta_1-\theta_2=\dM^1\varphi$.
Therefore, we obtain that $\theta_1-\theta_2\in B^2(A;V)$, $\theta_1$ and $\theta_2$ are in the same cohomological class.

Now we  prove that isomorphic abelian extensions give rise to the same element in  $H^2(A;V)$. Assume that $(\hat{A_1},\cdot_{\hat{A_1}})$ and $(\hat{A_2},\cdot_{\hat{A_2}})$ are two isomorphic abelian extensions of an anti-pre-Lie algebra $(A,\cdot)$ by $V$, and $\zeta:(\hat{A_1},\cdot_{\hat{A_1}})\longrightarrow (\hat{A_2},\cdot_{\hat{A_2}})$ is an anti-pre-Lie algebra isomorphism satisfying the commutative diagram in Definition \ref{defi:morphism}. Assume that $s_1:A\longrightarrow \hat{A_1}$ is a section of $\hat{A_1}$. By $p_2\circ \zeta=p_1$, we have
\begin{equation*}
p_2\circ (\zeta\circ s_1)=p_1\circ s_1=\Id_A.
\end{equation*}
Thus, we obtain that $\zeta\circ s_1$ is a section of $\hat{A_2}$. Define $s_2=\zeta\circ s_1$. Since $\zeta$ is an isomorphism of anti-pre-Lie algebras and $\zeta\mid_V=\Id_V$, for all $x,y\in A$, we have
\begin{eqnarray*}
 \theta_2(x,y)&=&s_2(x)\cdot_{\hat{A_2}} s_2(y)-s_2(x\cdot y)\\
  &=&(\zeta\circ s_1)(x)\cdot_{\hat{A_2}}(\zeta\circ s_1)(y)-(\zeta\circ s_1)(x\cdot y)\\
  &=&\zeta\big(s_1(x)\cdot_{\hat{A_1}} s_1(y)-s_1(x\cdot y)\big)\\
  &=&\theta_1(x,y).
\end{eqnarray*}
Thus, isomorphic abelian extensions give rise to the same element in $H^2(A;V)$.

Conversely, given two 2-cocycles $\theta_1$ and $\theta_2$, by \eqref{eq:6.1}, we can construct two abelian extensions $(A\oplus V,\diamond_1)$ and $(A\oplus V,\diamond_2)$. If  $\theta_1, \theta_2,\in H^2(A;V)$, then there exists $\varphi:A\longrightarrow V$, such that $\theta_1=\theta_2+\dM^1\varphi$. We define $\zeta:A\oplus V\longrightarrow A\oplus V$ by
\begin{equation*}
\zeta(x+u)=x+u+\varphi(x),\quad \forall ~x\in A, u\in V.
\end{equation*}
For all $x,y\in A, u,v\in V$, by $\theta_1=\theta_2+\dM^1\varphi$, we have
\begin{eqnarray*}&&\zeta\big((x+u)\diamond_1(y+v)\big)- \zeta(x+u)\diamond_2\zeta(y+v)\\
&=&\zeta\big(x\cdot y+\theta_1(x,y)+\rho(x)(v)+\mu(y)(u)\big)-\big(x+u+\varphi(x)\big)\diamond_2 \big(y+v+\varphi(y)\big)\\
&=&\theta_1(x,y)+\varphi(x\cdot y)-\theta_2(x,y)-\rho(x)\varphi(y)-\mu(y)\varphi(x)\\
&=&\theta_1(x,y)-\theta_2(x,y)-\dM^1\varphi(x,y)\\
&=&0,
\end{eqnarray*}
which implies that $\zeta$ is an anti-pre-Lie algebra isomorphism from $(A\oplus V,\diamond_1)$ to $(A\oplus V,\diamond_2)$. Moreover, it is obvious that the diagram in Definition \ref{defi:morphism} is commutative. This finishes the proof.
\end{proof}

\noindent{\bf Acknowledgement:}  This work is  supported by  NSF of Jilin Province (No. YDZJ202201ZYTS589), NNSF of China (Nos. 12271085, 12071405) and the Fundamental Research Funds for the Central Universities.

 \end{document}